\documentclass[12pt]{article}

\usepackage{amsthm,amssymb,mathtools} 
\usepackage{color}
\usepackage[inline]{enumitem}
\usepackage{hyperref}
\usepackage{nicematrix}
\usepackage[subrefformat=parens]{subcaption}
\usepackage{subfiles}
\usepackage{multirow}
\usepackage[scr=rsfs]{mathalpha}
\usepackage{empheq}

\usepackage{algorithmic}
\usepackage{algorithm}

\usepackage[nameinlink,capitalise]{cleveref}

\usepackage{tikz-cd} 
\usepackage{tikz-3dplot}
\usetikzlibrary{patterns}
\usetikzlibrary{angles}

\usepackage{macros}
\usepackage{mathoperators}
\usepackage{theorems}

\hypersetup{
     colorlinks=true,
     linkcolor=black,
     filecolor=blue,
     citecolor = black,
     urlcolor=cyan,
     }

\def\@themcountersep{}
\usepackage{fancybox,color}
\definecolor{lred}{rgb}{1,0.8,0.5}
\definecolor{lblue}{rgb}{0.8,0.8,1}
\definecolor{dred}{rgb}{0.6,0,0}
\definecolor{dblue}{rgb}{0,0,0.7}
\definecolor{violet}{rgb}{0.5804,0.0000,0.8275}
\definecolor{purple}{rgb}{0.2400,0.5700,0.2500}
\definecolor{TGreen}{rgb}{0,0.50,0.10}

\usepackage[mathlines]{lineno} 
\usepackage{amsmath}

\usepackage{etoolbox} 

\usepackage{easy-todo}

\newcommand*\linenomathpatch[1]{%
    \cspreto{#1}{\linenomath}%
    \cspreto{#1*}{\linenomath}%
    \csappto{end#1}{\endlinenomath}%
    \csappto{end#1*}{\endlinenomath}%
}
\newcommand*\linenomathpatchAMS[1]{%
    \cspreto{#1}{\linenomathAMS}%
    \cspreto{#1*}{\linenomathAMS}%
    \csappto{end#1}{\endlinenomath}%
    \csappto{end#1*}{\endlinenomath}%
}

\expandafter\ifx\linenomath\linenomathWithnumbers
\let\linenomathAMS\linenomathWithnumbers
\patchcmd\linenomathAMS{\advance\postdisplaypenalty\linenopenalty}{}{}{}
\else
\let\linenomathAMS\linenomathNonumbers
\fi

\linenomathpatch{equation}
\linenomathpatchAMS{gather}
\linenomathpatchAMS{multline}
\linenomathpatchAMS{align}
\linenomathpatchAMS{alignat}
\linenomathpatchAMS{flalign}

\usepackage[numbers, sort]{natbib}

\title{Tight Conic Relaxations for Rank-one Doubly Nonnegative Matrix Completion}

\makeatletter
\let\@fnsymbol\@arabic
\makeatother

\author{
\normalsize
    Godai Azuma\thanks{Department of  Mathematical and Computing Science,
    Institute of Science Tokyo, 2-12-1-W8-29 Oh-Okayama, Meguro-ku, Tokyo 152-8550, Japan
    ({\tt azuma@comp.isct.ac.jp}, {\tt Makoto.Yamashita@comp.isct.ac.jp}).
    The research of Godai Azuma was supported by JSPS KAKENHI Grant Number JP24K20738.
    The research of Makoto Yamashita was partially supported by JSPS KAKENHI Grant Number 24K14836.}\;\textsuperscript{,}\thinspace
    \thanks{Department of Industrial and Systems Engineering,
    Aoyama Gakuin University, 5-10-1 Fuchinobe, Chuo-ku, Sagamihara-shi, Kanagawa 252-5258, Japan.}
\and
\normalsize
    Sunyoung Kim\thanks{Department of Mathematics, Ewha W. University, 52 Ewhayeodae-gil, Sudaemoon-gu,
    Seoul 03760, Korea  ({\tt skim@ewha.ac.kr}).
    }
\and
\normalsize
    Makoto Yamashita\footnotemark[1]
}

\begin{document}
\maketitle

\begin{abstract}
\noindent
We study tight conic relaxations for a quadratically constrained quadratic programming (QCQP) 
formulation of rank-one doubly nonnegative (DNN) matrix completion. 
Motivated by sparse QCQPs whose lifted matrix variables include elements not directly specified by the objective or constraints,
we interpret tightness as a rank-one completion property for the unspecified elements.
For sparsity patterns whose blocks consist of cycles and edges, we 
prove that the dual formulations associated with the DNN and completely positive (CP) relaxations 
are equivalent. For cycle-type sparsity patterns, we derive explicit sufficient conditions under which 
the semidefinite programming (SDP) and DNN relaxations are tight. These sufficient conditions are
stated explicitly in terms of
local ratio bounds and cumulative-difference conditions on a rank-one 
optimal solution. We also show that adding suitable edges to the sparsity pattern relaxes the ratio 
conditions required for tightness. The results provide tractable certificates for when conic relaxations 
recover a rank-one optimal solution of the underlying QCQP.
\end{abstract}

\vspace{0.5cm}

\noindent
{\bf Key words. } Tight conic relaxations, Quadratically constrained quadratic programming,
Rank-one doubly nonnegative matrix completions, Rank-one solutions, Sparsity pattern of mathematical optimization problem.

\vspace{0.5cm}

\noindent
{\bf MSC Classification. }
90C20,      
90C22,      
90C25,      
90C26.      
 



\section{Introduction} \label{sec:introduction}
Quadratically constrained quadratic programs (QCQPs) are fundamental nonconvex optimization problems in global optimization. 
Since QCQPs are generally hard to solve globally, convex conic relaxations such as semidefinite programming (SDP) 
and doubly nonnegative (DNN) programming are widely used to obtain tractable lower bounds and, in some cases, 
recover global solutions. We say that a relaxation is \textit{tight} (or \textit{exact}) if its optimal value coincides with that of the original QCQP.
If, in addition, the relaxation admits a rank-one optimal solution,
then an optimal solution of the original QCQP can also be recovered~\cite{kim2003exact,Sojoudi2014exactness}.

When the data matrices are sparse~\cite{Azuma2021,Azuma2022,Burer2019}, some elements of the lifted matrix 
variable $Z$ are not fixed directly by the objective function and constraints~\cite{fukuda2001exploiting,grone1984positive}. 
From this viewpoint, tightness can be interpreted as a matrix completion question: whether the unspecified elements can 
be chosen so that the resulting matrix remains feasible and has rank one. This observation connects the study of tight 
conic relaxations with structured rank-one matrix completion.

In this paper, we focus on rank-one doubly nonnegative matrix completion. More precisely, 
we consider matrix completion problems in which the completed matrix is required to be rank one, and elementwise nonnegative.
Let $\Omega \subseteq \{1,\ldots,m\} \times \{1,\ldots,n\}$ denotes the index set of the specified elements,
    and suppose that the elements $A_{ij}$ are specified for $(i, j) \in \Omega$.
A  low-rank matrix completion~\cite{Candes2009,WrightMa2022} can be described as  
\begin{equation} \label{eq:matrix_completion}
    \mathrm{find} \quad X \in \Real^{m \times n} \quad
    \subto        \quad X_{ij} = A_{ij}, \; (i, j) \in \Omega, \;\; \rank X \leq R.
\end{equation}
The rank-one ($R=1$) completion problem admits a natural QCQP formulation, 
and the associated SDP, DNN, and completely positive (CP) relaxations can therefore be analyzed 
within the framework of tight conic relaxations for nonconvex optimization problems.

The sparsity pattern of the specified elements plays a central role in this analysis. We represent it by the bipartite graph,
where each edge corresponds to a specified matrix element. Under this representation, the question of 
tightness becomes a graph-structured rank-one completion problem. Our goal is to identify graph-based 
and solution-based conditions under which the conic relaxations recover a rank-one optimal solution.

This work is related to two strands of literature. 
The first strand studies rank-one matrix completion problems
    through polynomial optimization formulations and analyzes the tightness of their convex relaxations~\cite{Cosse2021,Azuma2025}.
In these works, doubly nonnegative structure is not imposed on the completion matrix; instead, exact recovery is investigated 
using Lasserre-type moment/sum-of-squares (SOS)~\cite{Lasserre2001,Parrilo2003}.
Although these hierarchies provide a systematic framework 
for proving tightness, their size grows rapidly with the relaxation order, which limits their practical scalability. 
By contrast, the present paper incorporates doubly nonnegative constraints on the completion matrix and 
studies simple conic relaxations, thereby preserving tractability and avoiding the rapid dimensional growth 
associated with higher-order Lasserre/SOS relaxations.
The second strand concerns recovery from noisy observations~\cite{Candes2010survey}. 
In that setting, the objective is to estimate a low-rank matrix that explains the observed matrix and remains close to the observed data,
    typically under random-sampling~\cite{Candes2009,Candes2010,Recht2011simpler} and probabilistic-noise assumptions~\cite{Candes2010survey,Keshavan2009matrix}.
This strand of research has received extensive attention, motivated by its importance
in machine learning, collaborative filtering~\cite{Srebro2004maximum}, and data analysis~\cite{Mazumder2010spectral}.
Much of that literature emphasizes 
stable or high-probability recovery~\cite{Candes2009,Candes2010}
whereas our focus is on deterministic exact recovery through the tightness of tractable conic relaxations.

The main contributions of this paper are as follows.
\begin{enumerate}
\item We formulate rank-one DNN matrix completion as a QCQP and study its SDP, DNN, and CP relaxations from the viewpoint of tight conic relaxation.
\item For sparsity patterns whose blocks consist of cycles and edges, we prove that the dual formulations associated with the DNN and CP relaxations are equivalent.
\item For cycle-type sparsity patterns, we derive explicit sufficient conditions for tightness of the SDP and DNN relaxations, including local ratio bounds and cumulative-difference conditions. We also show that adding suitable edges to the sparsity pattern relaxes the ratio conditions required for tightness.
\end{enumerate}

The rest of this paper is organized as follows.
Section~\ref{sec:preliminary} introduces the conic framework, the QCQP formulation of rank-one DNN matrix completion, and the corresponding relaxations.
Section~\ref{sec:equivalence} presents the equivalence result for the dual relaxations.
Section~\ref{sec:tightness_for_conic} presents sufficient conditions for tightness for cycle-type sparsity patterns.
We conclude in Section~\ref{sec:conclusion}.





\section{Preliminaries} \label{sec:preliminary}

\subsection{Notation}

We write $\Real^s$ for the $s$-dimensional Euclidean space, and
$\Real_+^s$ and $\Real_-^s$ for  its nonnegative and nonpositive orthants, respectively.
When $s = 1$, the superscript is omitted.
Let $\0 \in \Real^s$ and $\1 \in \Real^s$ be the zero vector and the all-ones vector, respectively.
The symbol $\e^i$ denotes the $i$th unit vector.
The $i$th element of a vector $\v \in \Real^s$ is denoted by $v_i$.
For column vectors $\x^1,\ldots,\x^k$, the symbol $[\x^1;\ldots;\x^k]$ denotes their vertical concatenation in this order,
\textit{i.e.}, $[\x^1;\ldots;\x^k] \coloneqq \trans{[\trans{(\x^1)}, \ldots, \trans{(\x^k)}]}$.
For a given index set $\Omega = \left\{(i_1,j_1),\ldots,(i_s,j_s)\right\}$,
    we may define a vector $\boldsymbol{\alpha} = \trans{[\alpha_{i_1,j_1},\ldots,\alpha_{i_s,j_s}]} \in \Real^s$
    consisting only of the components indexed by $\Omega$. 
We denote the set of all $s \times s$ symmetric matrices by $\SymMat^s$. 
The symbols $O$ and $I$ are used to denote the zero matrix and the identity matrix, respectively. 
The matrix $E_{ij}$ is defined by $E_{ij} \coloneqq \e^{i}\trans{\left(\e^{i}\right)}$ if $i = j$,
    and $E_{ij} \coloneqq \e^{i}\trans{\left(\e^{j}\right)} + \e^{j}\trans{\left(\e^{i}\right)}$ otherwise.
For $d_1,\ldots,d_s \in \Real$, $\diag(d_1,\ldots,d_s)$ denotes the diagonal matrix with diagonal elements $d_1,\ldots,d_s$.
For a symmetric matrix $A \in \SymMat^s$ and index sets $I, J \subseteq \{1,\ldots,s\}$, 
    we denote by $A_{I,J}$ the submatrix of $A$ obtained by selecting rows indexed by $I$ and columns indexed by $J$.
When $I = J$, the principal submatrix $A_{I,I}$ is denoted by $A_{I}$.
For a symmetric matrix $A \in \SymMat^s$, the relation $A \succeq O$ means that $A$ is positive semidefinite (PSD).
We use the following convex cones of symmetric matrices~\cite{ShakedMonderer2021copositive}:
\begin{itemize}
\item the positive semidefinite (PSD) cone
    $\SymMat_+^s = \left(\SymMat_+^s\right)^* = \left\{ M \in \SymMat^s \,|\, \trans{\x}M\x \geq 0 \; \text{for all $\x \in \Real^s$} \right\}$,
\item the nonnegative cone
    $\SymN^s = \left(\SymN^s\right)^* = \left\{ M \in \SymMat^s \,|\, M_{ij} \geq 0 \; \text{for all $(i, j) \in \left\{1,\ldots,s\right\}^2$} \right\}$,
\item the doubly nonnegative (DNN) cone
    $\DNN^s = \SymMat_+^s \cap \SymN^s$,
\item the semidefinte-plus-nonnegative (SPN) cone
    $\SPN^s = \left(\DNN^s\right)^* = \SymMat_+^s + \SymN^s$,
\item the copositive (COP) cone
    $\COP^s = \left\{ M \in \SymMat^s \,|\, \trans{\x}M\x \geq 0 \; \text{for all $\x \in \Real_+^s$} \right\}$, and
\item the completely positive (CP) cone
    \[ \CP^s = \left\{ \sum_{i=1}^t \a^i\trans{\left(\a^i\right)} \;\middle|\;
        t \in \Natural, \; \a^i \in \Real_+^s \; \text{for all $i \in \left\{1,\ldots,t\right\}$} \right\}, \]
\end{itemize}
where $\left(\KC\right)^*$ denotes the dual cone of $\KC$.
For every $s \in \Natural$,
    the inclusions $\CP^s \subseteq \DNN^s \subseteq \SymMat_+^s \subseteq \SPN^s \subseteq \COP^s$ hold.
For $A, B \in \SymMat^s$,
    $\ip{A}{B}$ denotes the Frobenius inner product, i.e., $\ip{A}{B} = \trace(AB)$.

Let $G = (V,E)$ be an undirected graph. A graph $G$ is bipartite if its vertex set can be partitioned 
into two disjoint sets $T$ and $U$ such that no edge joins two vertices in the same set. 
Equivalently, every cycle in $G$ has even length. In that case, we write $G = (T,U,E)$ with $E \subseteq T \times U$.

\subsection{Sparsity pattern of matrices and results on cones}

The sparsity pattern of matrices in conic programming plays an important role in this paper.
We therefore begin by introducing the graph representation of the sparsity pattern of a symmetric matrix.

\begin{definition}
    The set $\overline{E} \subseteq \left\{1,\ldots,n\right\}^2$ is called 
        the sparsity pattern of a symmetric matrix $M \in \SymMat^n$ if
    \[ (i,j) \in \overline{E} \iff M_{ij} \neq 0 \quad \text{for all $(i, j) \in \{1,\ldots,n\}^2$}. \]
    Define the vertex set $V \coloneqq \left\{1,\ldots,n\right\}$
        and the edge set $E \coloneqq \overline{E} \setminus \left\{(i,i) \,\middle|\, i \in \left\{1,\ldots,n\right\}\right\}$.
    Then, $G = (V, E)$ is called the sparsity pattern graph of $M$.
\end{definition}
\noindent
The symmetry of  $M$  identifies the edges
$(i,j)$ and $(j,i)$ in the sparsity pattern graph.

We also recall the notion of a block.  
A cut vertex of a graph $G$ is a vertex whose removal increases the number of connected components in $G$.
A block of $G$ is a maximal subgraph with no cut vertex. 
When every block of the sparsity pattern graph $G$ of a symmetric matrix $M$ is either an edge or a cycle,
    membership in the SPN cone is equivalent to membership in the COP cone. 
This fact will be used in Section~\ref{sec:equivalence}.
\begin{lemma}[{\cite[Theorem~9.9]{shakedmonderer2016spn}}] \label{lem:spn_graph}
    Let $M \in \SymMat^s$, and let $G$ be its sparsity pattern graph.
    Suppose that each block of $G$ is either an edge or a cycle.
    Then, $M \in \SPN^s$ if and only if $M \in \COP^s$.
\end{lemma}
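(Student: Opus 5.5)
One inclusion is immediate: $\SPN^s \subseteq \COP^s$, since $\trans{\x}(P+N)\x = \trans{\x}P\x + \trans{\x}N\x \ge 0$ for $\x \in \Real_+^s$, $P \succeq O$ and $N$ entrywise nonnegative. So the work is to show that a copositive $M$ whose sparsity graph $G$ has only edges and cycles as blocks can be written as $P+N$. I would argue by induction on the number of blocks, gluing decompositions at cut vertices. Along the way I would keep the invariant that $P$ and $N$ are supported on the pattern of $M$ (diagonal plus $E$).

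\textbf{Reductions.} First discard zero rows and columns. A zero row forces the corresponding diagonal entry to be $0$, which is harmless. Next, any negative-free off-diagonal entry can be moved into $N$. Since $M_{ij} > 0$ for $i \neq j$ only makes copositivity easier, I would like to replace positive off-diagonal entries by $0$ and put them into $N$. This is not always legitimate, because removing a positive entry can destroy copositivity. So instead I would keep track of the sign pattern and treat the negative edges as the essential part.

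\textbf{Base cases.} For a single edge block, $M$ is $2\times 2$. If $M_{12}\ge 0$, take $P=\diag(M_{11},M_{22})$ (nonnegative by copositivity) and $N$ the off-diagonal part. If $M_{12}<0$, copositivity gives $M_{12}^2 \le M_{11}M_{22}$, so $M \succeq O$. For a cycle block, I would use the Shaked-Monderer type argument. If every edge of the cycle is negative, copositivity on $\Real_+^s$ is equivalent to $\trans{\x}M\x\ge0$ for all $\x$: for an even cycle, flipping signs around the bipartition shows this; for an odd cycle it is exactly where care is needed, and I would reduce via the standard lemma that $M$ is copositive iff $\trans{|\x|}M|\x|\ge0$ when all off-diagonals are nonpositive. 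If the cycle has some positive edges, then after removing them the remaining graph is a forest of paths (negative edges). I would choose how to split each diagonal entry so as to absorb the positive entries into $N$, reducing to the path case, where copositive with nonpositive off-diagonals implies PSD.

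\textbf{Gluing.} Let $v$ be a cut vertex with $G = G_1 \cup G_2$ and $G_1\cap G_2=\{v\}$. I would split $M = M^{(1)} + M^{(2)}$, where $M^{(k)}$ is supported on $G_k$ and $M_{vv}=a+b$, with $a,b$ chosen so that both parts are copositive. The natural choice is $a = \min\{\trans{\x}M^{(1)}_{\cdot}\x : x_v=1,\ \x\ge0 \text{ on } G_1 \text{ minus the } vv \text{ term}\}$ negated. That is, take $a$ minimal such that $M^{(1)}$ is copositive. Then I would show that $M_{vv}-a$ makes $M^{(2)}$ copositive. This follows from the minimizer structure: for $\x\ge0$ split on the two sides, $\trans{\x}M\x$ decomposes as a sum where the $G_1$ part is at least $-a'x_v^2$. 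Finally, apply induction to each part and add the decompositions.

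\textbf{Main obstacle.} I expect the main obstacle to be the cycle base case with mixed signs, especially odd cycles whose edges are all negative. There $\COP$ and $\SPN$ agreement relies on the cycle having no chord, and I would try an explicit decomposition modeled on \cite[Theorem~9.9]{shakedmonderer2016spn}. Alternatively, since the statement is exactly that cited theorem, the honest plan is to invoke it directly.
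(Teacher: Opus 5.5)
The paper gives no argument for this lemma; it imports \cite[Theorem~9.9]{shakedmonderer2016spn} verbatim. Your closing suggestion to invoke that theorem is therefore exactly the paper's route, and it suffices. The self-contained sketch before it is not a proof, though. The gap is where you expected it: a cycle block whose edges have both signs.

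\textbf{The mixed-sign cycle step fails.} Your plan is to put the positive off-diagonal entries into $N$, adjusting only how diagonal entries are split, and then use that a copositive $Z$-matrix on a union of paths is PSD. Deleting the positive entries can destroy copositivity, and no diagonal splitting repairs this: diagonal mass placed in $N$ only makes $P=M-N$ less positive semidefinite.
\begin{itemize}
\item Triangle: take $M=vv^{\top}$ with $v=(1,-1,1)^{\top}$. Its graph is $C_3$ and it is PSD. Deleting $M_{13}=1$ leaves $\left[\begin{smallmatrix}1&-1&0\\-1&1&-1\\0&-1&1\end{smallmatrix}\right]$, which has eigenvalue $1-\sqrt{2}<0$. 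Moreover $u=(1,2,1)^{\top}>0$ satisfies $u^{\top}Mu=0$. So any decomposition $M=P+N$ with $P\succeq O$ and $N\geq O$ gives $u^{\top}Nu=0$, hence $N=O$: the positive entry must stay entirely in $P$.
\item Even cycle (the case relevant to this paper): let $B$ be the signed adjacency matrix of $C_4$ with entries $-1$ on $\{1,2\},\{2,3\},\{3,4\}$ and $+1$ on $\{1,4\}$. Then $B^2=2I$, so $M=I+B/\sqrt{2}\succeq O$, with positive kernel vector $(1,2,2\sqrt{2}-1,2-\sqrt{2})^{\top}$. This again forces $N=O$. Deleting the $+1/\sqrt{2}$ entry gives $I-A(P_4)/\sqrt{2}$, where $A(P_4)$ is the adjacency matrix of the path $1$--$2$--$3$--$4$; it has eigenvalue $1-\sqrt{2}\cos(\pi/5)<0$.
\end{itemize}
The substance of the cited theorem is exactly this case. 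One must show that, for a copositive matrix supported on a cycle, each positive entry can be split between $P$ and $N$ so that $P\succeq O$. That is where copositivity of $M$ itself, not of its negative part, has to be used, and your sketch gives no mechanism for it.

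\textbf{The other ingredients are sound, with small corrections.}
\begin{itemize}
\item The all-negative cycle needs no parity discussion or sign flipping. For any $Z$-matrix, $x^{\top}Mx\geq |x|^{\top}M|x|$, so copositive implies PSD.
\item The cut-vertex gluing is correct in substance, but the inequality you state runs the wrong way for $M^{(2)}$. Write $b$ and $C$ for the $(v,\,V(G_1)\setminus\{v\})$ and $V(G_1)\setminus\{v\}$ parts of $M$, and set $a^*=-\inf_{y\geq 0}(2b^{\top}y+y^{\top}Cy)$. To show $M^{(2)}$ is copositive at $x=(x_v,u)\geq 0$ with $x_v>0$, evaluate $M$ at the vector with $x_v y$ on $V(G_1)\setminus\{v\}$, $x_v$ at $v$, and $u$ on $V(G_2)\setminus\{v\}$. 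Then take the infimum over $y\geq 0$. This needs the $G_1$ contribution to reach down to $-a^*x_v^2$, not a lower bound on it. It also uses that $M$ has no entries between $V(G_1)\setminus\{v\}$ and $V(G_2)\setminus\{v\}$.
\end{itemize}
With these fixes the reduction to a single cycle block is valid. The cycle case itself remains unproved, so as written the proposal stands only through the citation.
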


\subsection{Tight conic relaxations of QCQPs and strong duality} \label{ssec:sdp_relaxation}

We consider the following standard inequality form of QCQPs:  
\begin{equation} \label{eq:general_qcqp}
    \begin{array}{rl}
        \min\limits_{\x} & \trans{\x}Q^0\x + 2\trans{(\q^0)}\x \\
        \subto           & \trans{\x}Q^k\x + 2\trans{(\q^k)}\x \leq b_k, \quad k = 1,\ldots,m. 
    \end{array}
\end{equation}
Here, $\x \in \Real^n$ is a variable, while
$Q^0,\ldots,Q^m \in \SymMat^n$, $\q^0,\ldots,\q^m \in \Real^n$, and $\b \in \Real^m$ are given data.
In general, problem~\eqref{eq:general_qcqp} is nonconvex and difficult to solve globally 
unless the matrices  $Q^k$
are positive semidefinite for all $k \in \{0,\ldots,m\}$.
To obtain a tractable convex relaxation, 
we lift $\x\trans{\x}$ to a matrix variable and replace the implicit rank-one requirement in 
\eqref{eq:general_qcqp}
 by a conic constraint over a convex cone $\KC$, which yields

\begin{equation} \label{eq:general_sdp_primal}
    \begin{array}{rl}
        \min\limits_{\x, X} & \ip{Q^0}{X} + 2\trans{(\q^0)}\x \\
        \subto
                            & \ip{Q^k}{X} + 2\trans{(\q^k)}\x \leq b_k, \quad k = 1,\ldots,m, \\
                            & \begin{bmatrix} 1 & \trans{\x} \\ \x & X \end{bmatrix} \in \KC.
    \end{array}
\end{equation}
With an appropriate choice of  $\KC$ that makes \eqref{eq:general_sdp_primal} 
efficiently solvable, problem \eqref{eq:general_sdp_primal} yields a tractable lower bound for 
\eqref{eq:general_qcqp}.
The semidefinite programming (SDP) relaxation corresponds to \eqref{eq:general_sdp_primal} 
with $\KC = \SymMat_+^{n+1}$. The doubly nonnegative (DNN) relaxation is obtained by taking $\KC = \DNN^{n+1}$. 
When $X$ is elementwise nonnegative, the conditions $X \in \SymMat_+^n$ and $X \in \DNN^n$ are equivalent.
The dual problem of \eqref{eq:general_sdp_primal} is
\begin{equation} \label{eq:general_sdp_dual}
    \begin{array}{rl}
        \max\limits_{\boldsymbol{\xi}, \psi} & -\trans{\b}\boldsymbol{\xi} + \psi \\
        \subto
            & \begin{bmatrix} -\psi & \trans{(\q^0)} \\ \q^0 & Q^0 \end{bmatrix} 
                + \sum\limits_{k = 1}^m \xi_k \begin{bmatrix} 0 & \trans{(\q^k)} \\ \q^k & Q^k \end{bmatrix} \in \KC^*, \\
            & \boldsymbol{\xi} \geq \0.
    \end{array}
\end{equation}

A relaxation of \eqref{eq:general_qcqp} is called tight if its optimal value coincides with that of \eqref{eq:general_qcqp}.
A well-known sufficient condition for tightness is that \eqref{eq:general_sdp_primal} admits a rank-$1$ optimal solution of the form: 
\[
    \begin{bmatrix} 1 & \trans{\left(\x^*\right)} \\ \x^* & X^* \end{bmatrix}.
\]
In this case, the relaxation not only attains the optimal value of \eqref{eq:general_qcqp} but also recovers an optimal solution of \eqref{eq:general_qcqp}.
For this reason, the study of tight convex relaxations for QCQPs has been
    actively developed in recent years~\cite{ai2025tightness,Argue2020necessary,Azuma2021,Azuma2022,Kilinckarzan2021exactness,Kojima2025extending,Wang2021tightness}.
\subsection{Doubly nonnegative (DNN) completion problem}

A matrix completion problem is called a doubly nonnegative (DNN) completion problem  
    if the completed matrix is required to be doubly nonnegative, 
    i.e., a positive semidefinite matrix and elementwise nonnegative.
A rank-one DNN completion problem is a DNN completion problem with the rank-one constraint: 
\begin{equation} \label{eq:dnn_completion}
    \begin{array}{rl}
        \mathrm{find}    & X \in \Real^{m \times n} \\
        \subto           & A_{ij} = X_{ij}, \quad (i, j) \in \Omega, \\
                         & \rank X \leq 1, \quad X \geq O.
    \end{array}
\end{equation}
Since the completed matrix is required to be doubly nonnegative,  each specified element of $A$ must be nonnegative;   
    otherwise, the constraints $A_{ij} = X_{ij}$ and $X \geq O$ imply that \eqref{eq:dnn_completion} is infeasible. 
Since a rank-one matrix $X$ can be written as $\x\trans{\y}$ for some $\x \in \Real^m$ and $\y \in \Real^n$,  
    the $(i,j)$th element $X_{ij}$ in \eqref{eq:dnn_completion} equals $x_i y_j$.
Letting  $\z \coloneqq [\x; \y] \in \Real^{m+n}$,
    \eqref{eq:dnn_completion} can be reformulated as the following QCQP:  
\begin{equation} \label{eq:precise_qcqpform_dnn_completion}
    \begin{array}{rll}
        \mathrm{min}     & \sum_{i=1}^{m+n} z_i^2 \\
        \subto           & A_{ij} - z_i z_{m+j} = 0, & (i, j) \in \Omega, \\
        & \sum_{i=1}^{m+n} z_i^2 \leq r, \quad z_1 = 1, \quad \z \geq \0,
    \end{array}
\end{equation}
where $r$ is a sufficiently large constant.
By adding $z_iz_{m+j}$ to the objective  so that the equality constraints are relaxed,   
   we obtain the following alternative formulation:               
\begin{equation} \label{eq:qcqpform_dnn_completion}
    \begin{array}{rll}
        \mathrm{min}     & \sum_{i=1}^{m+n} z_i^2 + 2\rho \sum_{(i,j) \in \Omega}  z_i z_{m+j} \\
        \subto           & A_{ij} - z_i z_{m+j} \leq 0, & (i, j) \in \Omega, \\
                         & \sum_{i=1}^{m+n} z_i^2 \leq r, \quad z_1 = 1, \quad \z \geq \0,
    \end{array}
\end{equation}
where $\rho$ is a sufficiently large positive number.
In this paper, we study \eqref{eq:qcqpform_dnn_completion} rather than \eqref{eq:precise_qcqpform_dnn_completion}, 
    under the following natural assumption. 
\begin{assum} \label{asm:connected}
    The bipartite graph $(\{1,\ldots,m\},\{1,\ldots,n\}, \Omega)$ is connected and $A_{ij} > 0$ for all $(i,j) \in \Omega$.
\end{assum}
\noindent
When the graph is disconnected, the completion problem~\eqref{eq:dnn_completion} 
 decomposes into independent subproblems corresponding to the connected components of the graph. 
 We may therefore impose Assumption~\ref{asm:connected} without loss of generality and 
 focus on the connected case throughout the paper. 
 Under this assumption, any optimal solution $\z^*$ of \eqref{eq:qcqpform_dnn_completion} 
 has no zero components.

Applying the framework in Section~\ref{ssec:sdp_relaxation} to both~\eqref{eq:precise_qcqpform_dnn_completion} and~\eqref{eq:qcqpform_dnn_completion},
    we write their relaxations in a unified form.
Let \(\theta \in \{0,1\}\), and define
\[
    Q_\theta \coloneqq I + \theta \rho \sum_{(i,j)\in\Omega} E_{i,m+j}, \qquad
    \trianglelefteq_\theta \coloneqq \begin{cases}
        =    & \text{if $\theta=0$},\\
        \leq & \text{if $\theta=1$}.
    \end{cases}
\]
Then, we obtain the following conic relaxation: 
\begin{equation} \label{eq:dnn_relaxation}
    \begin{array}{rll}
        \mathrm{min}     & \ip{Q_\theta}{Z} \\
        \subto           & \ip{\left(2A_{ij} E_{11} - E_{i,m+j}\right)}{Z} \trianglelefteq_\theta 0, & (i, j) \in \Omega, \\
                         & \ip{\left(I - rE_{11}\right)}{Z} \leq 0, \\
                         & Z \in \KC, \quad Z_{11} = 1, \quad \ip{E_{1j}}{Z} \geq 0, \quad & j \in \{2,\ldots,m+n\},
    \end{array}
\end{equation}
where $\theta=0$ and $\theta=1$ correspond to the relaxation of \eqref{eq:precise_qcqpform_dnn_completion} and \eqref{eq:qcqpform_dnn_completion}, respectively,
    and $\KC$ is taken to be any one of $\SymMat_+^{m+n}$, $\DNN^{m+n}$ or $\CP^{m+n}$.
The constraints $\ip{E_{1j}}{Z} \geq 0$ encode the nonnegativity constraints $z_j \geq 0$ in the lifted space.
Let $\z^* \in \Real_+^{m+n}$ be an optimal solution of \eqref{eq:precise_qcqpform_dnn_completion} and define $Z^* \coloneqq \z^*\trans{\left(\z^*\right)}$.
Since \eqref{eq:dnn_relaxation} is a relaxation of \eqref{eq:precise_qcqpform_dnn_completion},
    $Z^*$ is a feasible solution of \eqref{eq:dnn_relaxation} with $\theta = 0$.
Introducing dual variables $\mu_0, \lambda, \boldsymbol{\w}$, 
    and setting the dual variables for the inactive nonnegativity constraints $E_{ij}\bullet Z\geq 0$ to zero,
    we write the dual problem of the relaxation as
\begin{equation} \label{eq:spn_relaxation}
    \begin{array}{rl}
        \max\limits_{\mu, \lambda, \w}
               & - \mu_0 \\
        \subto & (1 + \lambda)I + \sum\limits_{(i,j) \in \Omega} \left(\theta \rho - w_{i,j}\right) E_{i,m+j} + \left(\mu_0 - r\lambda + \sum\limits_{(i,j) \in \Omega} 2A_{ij} w_{i,j}\right)E_{11} \in \KC^*, \\
               & \mu_0 \in \Real, \quad \lambda \in \Real_+, \quad \theta w_{i,j} \geq 0, \quad (i, j) \in \Omega.
    \end{array}
\end{equation}

The relaxation~\eqref{eq:dnn_relaxation} satisfies Slater's condition~\cite{Boyd2004}
    when $\KC \in \left\{\SymMat_+^{m+n}, \DNN^{m+n}\right\}$ and $\theta \in \{0, 1\}$.
Define a matrix $P \in \SymMat^{m+n}$ given by
\[
    P_{\bar{i}\,\bar{j}} = \begin{cases}
        d & \text{if $\bar{i} = \bar{j} \in \{2,\ldots,m+n\}$}, \\
        \theta & \text{if $(\bar{i}, \bar{j}) \in E_\Omega$ or $(\bar{j}, \bar{i}) \in E_\Omega$} \\
        0 & \text{otherwise},
    \end{cases},
\]
where $d$ is a positive number and $E_\Omega \coloneqq \{(i, m+j), (m+j, i) \,|\, (i, j) \in \Omega\}$.
We will show that $Z^* + P$ is a strictly feasible solution of \eqref{eq:dnn_relaxation} for sufficiently large $d$.
When $\theta = 1$, for any $(i,j) \in \Omega$, it holds that
\[
    \ip{\left(2A_{ij} E_{11} - E_{i,m+j}\right)}{\left(Z^* + P\right)}
    < \ip{\left(2A_{ij} E_{11} - E_{i,m+j}\right)}{Z^*} \leq 0.
\]
When $\theta = 0$, the same constraints are equalities and are preserved at $Z^* + P$.
The $(1,1)$th element of $Z^* + P$ remains $1$.
By the Schur complement,
    the positive definiteness of $Z^* + P$ follows once we show that the following matrix is positive definite:
\begin{equation} \label{eq:schur_complement_z_plus_p}
    Z^*_{\{2,\ldots,m+n\}} + P_{\{2,\ldots,m+n\}}
    - \left(Z^*_{\{2,\ldots,m+n\},\{1\}} + P_{\{2,\ldots,m+n\},\{1\}}\right)
    \trans{\left(Z^*_{\{2,\ldots,m+n\},\{1\}} + P_{\{2,\ldots,m+n\},\{1\}}\right)}.
\end{equation}
Since all the diagonal elements of the principal submatrix $P_{\{2,\ldots,m+n\}}$ are equal to $d$,
taking $d$ sufficiently large ensures that \eqref{eq:schur_complement_z_plus_p}  is positive definite.
Moreover, since $(Z^*+P)_{11}=1$, choosing $r$ sufficiently large gives 
$\ip{\left(I - rE_{11}\right)}{\left(Z^* + P\right)} < 0$.
Hence, 
    Slater's condition holds.
Therefore, the optimal values of \eqref{eq:dnn_relaxation} and \eqref{eq:spn_relaxation} coincide,
    and any optimal primal-dual pair satisfies the complementary slackness condition.




\section{Equivalence of SPN and COP relaxations} \label{sec:equivalence}

In this section, we examine the equivalence of 
the dual relaxations associated with rank-one DNN matrix completion. 
In particular, when every block of the sparsity pattern graph is either an edge or a cycle, 
the dual problems \eqref{eq:spn_relaxation} with $\KC^*=\SPN^{m+n}$ and $\KC^*=\COP^{m+n}$ have the same optimal value. 
This result clarifies the relationship between these two convex-cone relaxations.

For notational convenience, define
\begin{equation} \label{eq:dual_matrix_in_spn_relaxation_with_w}  
    S_\theta(\mu_0, \lambda, \w) \coloneqq
        (1 + \lambda)I + \sum\limits_{(i,j) \in \Omega} \left(\theta\rho - w_{i,j}\right) E_{i,m+j}
        + \left(\mu_0 - r\lambda + \sum\limits_{(i,j) \in \Omega} 2A_{ij} w_{i,j}\right)E_{11},
\end{equation}
Then, the sparsity pattern graph of $S_\theta(\mu_0, \lambda, \w)$ is a subgraph of $(\left\{1,\ldots,m+n\right\}, \overline{\Omega})$, where
\[
    \overline{\Omega} \coloneqq
    \left\{ (i, i) \,\middle|\, i \in \left\{1,\ldots,m+n\right\}\right\} \cup
    \left[\bigcup_{(i, j) \in \Omega} \left\{ (i, m+j), (m+j, i) \right\}\right].
\]   
When $\widehat{\w} = \theta\rho\1$,  the second term in $S_\theta(\widehat{\mu}_0, \widehat{\lambda}, \widehat{\w})$ vanishes
    for any $\widehat{\mu}_0 \in \Real$ and $\widehat{\lambda} \in \Real_+$,
    so  $S_\theta(\widehat{\mu}_0, \widehat{\lambda}, \widehat{\w})$ is diagonal.
In particular, by taking $\widehat{\mu}_0 = -\sum_{(i,j) \in \Omega} 2\rho A_{ij}$ and $\widehat{\lambda} = 0$,
    then $S_\theta(\widehat{\mu}_0, \widehat{\lambda}, \widehat{\w}) = I \in \KC^*$ and
    the objective value of \eqref{eq:spn_relaxation} is $\sum_{(i,j) \in \Omega} 2\rho A_{ij}$.

The computational difficulty associated with the CP and COP cones is well-known.
For example, determining whether a given matrix is completely positive or copositive is NP-hard~\cite{dickinson2014computational,Murty1987some},
    and related tasks such as finding a CP-decomposition or determining the cp-rank are also difficult~\cite{dickinson2012linear}.
As a consequence, the problem~\eqref{eq:spn_relaxation} with $\KC^* = \COP^{m+n}$ is computationally intractable.
Compared with $\COP^{m+n}$,  the cone $\SPN^{m+n}$ is  more amenable to computation. 
The following theorem shows, however, that for the DNN completion problem,
    the two formulations~\eqref{eq:spn_relaxation} with $\KC^* = \SPN^{m+n}$ and $\KC^* = \COP^{m+n}$
    are equivalent.  
\begin{theorem} \label{thm:equivalence_of_relaxation}
    Let $G = \left(\left\{1,\ldots,m+n\right\}, \overline{\Omega}\right)$,
        and $\theta \in \{0, 1\}$.
    Suppose that
    \begin{enumerate*}[label={(\alph*)}]
    \item $G$ is connected, and
    \item every block in $G$ is either an edge or a cycle.
    \end{enumerate*}
    The dual relaxations~\eqref{eq:spn_relaxation} with $\KC^* = \SPN^{m+n}$ and $\KC^* = \COP^{m+n}$ are equivalent.  
 \end{theorem}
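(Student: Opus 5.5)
The plan is to show that the two dual problems \eqref{eq:spn_relaxation} have the same feasible set. Then they trivially have the same optimal value and the same optimal solutions. Since $\SPN^{m+n} \subseteq \COP^{m+n}$, every feasible point with $\KC^* = \SPN^{m+n}$ is feasible with $\KC^* = \COP^{m+n}$. So only the reverse inclusion needs an argument.

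Take any $(\mu_0,\lambda,\w)$ feasible for the COP version, so that $S_\theta(\mu_0,\lambda,\w) \in \COP^{m+n}$. I will apply Lemma~\ref{lem:spn_graph} to $M = S_\theta(\mu_0,\lambda,\w)$. The key observation is the one recorded just before the theorem: by \eqref{eq:dual_matrix_in_spn_relaxation_with_w}, the only off-diagonal entries of $S_\theta$ are the coefficients $\theta\rho - w_{i,j}$ at positions $(i,m+j)$ and $(m+j,i)$ for $(i,j)\in\Omega$. Hence the sparsity pattern graph $G_S$ of $S_\theta$ is a spanning subgraph of $G$, obtained by deleting those edges $(i,m+j)$ with $w_{i,j} = \theta\rho$.

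The main step is to check that the hypothesis of Lemma~\ref{lem:spn_graph} survives this passage to a subgraph: every block of $G_S$ must be an edge or a cycle. I will argue as follows.
\begin{enumerate}
\item Every cycle of $G_S$ is a cycle of $G$.
\item A cycle of $G$ lies inside a single block of $G$, and that block must then be a cycle. So the cycle of $G_S$ equals that block of $G$.
\item Let $B$ be a block of $G_S$ with at least three vertices. Then $B$ is 2-connected, so any two of its edges lie on a common cycle.
\item By step 2 each such cycle is an entire block-cycle of $G$. So two distinct block-cycles of $G$ sharing an edge would coincide, and hence all edges of $B$ lie on one cycle $C$ of $G$.
\item Therefore $B \subseteq C$, and $B$, being 2-connected, must equal $C$.
\end{enumerate}
Blocks with at most two vertices are single edges or isolated vertices. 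An isolated vertex is harmless: it is a trivial block and contributes only a nonnegative diagonal entry, which can be handled directly or treated as a degenerate case of the lemma. Connectedness of $G_S$ is not needed, since Lemma~\ref{lem:spn_graph} concerns blocks only. If needed, I would also note that a direct-sum decomposition over connected components preserves both SPN and COP membership.

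With the block condition in hand, Lemma~\ref{lem:spn_graph} gives $S_\theta(\mu_0,\lambda,\w) \in \SPN^{m+n}$. The remaining constraints $\lambda \ge 0$ and $\theta w_{i,j}\ge 0$ do not depend on the cone, so $(\mu_0,\lambda,\w)$ is feasible for the SPN version. The two feasible sets coincide and share the objective $-\mu_0$, which proves equivalence.

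I expect the main obstacle to be the subgraph-inheritance step above, the claim that the blocks of $G_S$ are still edges or cycles. The cleanest route is the characterization "every block is an edge or a cycle iff no two cycles share an edge", which is clearly preserved under deleting edges. I would prove or cite this characterization first, and then handle the degenerate isolated-vertex blocks explicitly.
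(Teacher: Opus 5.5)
Your proposal is correct and follows the same route as the paper. For a COP-feasible point, the sparsity pattern graph of $S_\theta(\mu_0,\lambda,\w)$ is a subgraph of $G$, its blocks are edges or cycles, and Lemma~\ref{lem:spn_graph} places the slack matrix in $\SPN^{m+n}$; the inclusion $\SPN^{m+n}\subseteq\COP^{m+n}$ gives the other direction.

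You go beyond the paper in two places. You prove that the edge-or-cycle block property passes to subgraphs, using the fact that distinct blocks share no edge, whereas the paper only asserts this. You also deal explicitly with isolated vertices created by deleted edges, which the paper's proof leaves implicit.
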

\begin{proof}
  Let $(\overline{\mu}_0,\overline{\lambda},\overline{\w})$ be a feasible point of \eqref{eq:spn_relaxation} 
  with $\KC^*=\COP^{m+n}$, and let $H$ be the sparsity pattern graph of 
  $S_\theta(\overline{\mu}_0,\overline{\lambda},\overline{\w})$. By construction, $H$ is a subgraph of $G=(\{1,\ldots,m+n\},\overline{\Omega})$. 
  Hence every block of $H$ is either an edge or a cycle. By Lemma~\ref{lem:spn_graph}, it follows 
  that $S_\theta(\overline{\mu}_0,\overline{\lambda},\overline{\w}) \in \SPN^{m+n}$. 
  Therefore, $(\overline{\mu}_0,\overline{\lambda},\overline{\w})$ is also feasible 
  for \eqref{eq:spn_relaxation} with $\KC^*=\SPN^{m+n}$.
\end{proof}
\noindent
Assumption~(b) in Theorem~\ref{thm:equivalence_of_relaxation} is not restrictive in many matrix-completion settings. 
Indeed,  from the viewpoint of matrix completion, 
edges that can be removed without destroying connectivity correspond 
to redundant constraints. In such cases, removing redundant edges may lead to a graph that satisfies 
the assumptions of Theorem~\ref{thm:equivalence_of_relaxation}.

\begin{rema}
   The result of Theorem~\ref{thm:equivalence_of_relaxation} holds independently of the constraint $\ip{\left(I - rE_{11}\right)}{Z} \leq 0$.
    Therefore, the CP and SPN relaxations are equivalent even if the original QCQP lacks the corresponding constraint $\sum_{i=1}^{m+n} z_i^2 \leq r$.
   Under this constraint, strong duality implies that each primal relaxation and its dual have the same optimal value. 
    If either primal relaxation is tight, then it admits a rank-one optimal solution $Z^* \coloneqq \z^*\trans{(\z^*)}$, 
    which is also feasible for the SDP relaxation. Hence the SDP relaxation is tight as well. Moreover, 
    if the SDP relaxation, i.e., \eqref{eq:dnn_relaxation} with $\KC=\SymMat_+^{m+n}$, is tight, 
    then it shares the same optimal value as the DNN and CP relaxations.
\end{rema}




\section{Tightness conditions for conic relaxations} \label{sec:tightness_for_conic}

In this section, we derive sufficient conditions under which the SDP and DNN relaxations~\eqref{eq:dnn_relaxation} are tight. 
Throughout, we focus on square completion problems with connected bipartite graphs.
Under these assumptions, 
tightness can be verified by constructing a feasible solution satisfying both a linear system
obtained from complementary slackness and the conic condition that the associated slack matrix lies in $\KC^*$.

Let $\z^*$ be a unique solution of \eqref{eq:precise_qcqpform_dnn_completion}, and define 
\[ Z^* \coloneqq \z^*\trans{\left(\z^*\right)}. \]
Then the columns of $Z^*$ are given by $\z^*, z^*_2\z^*, \ldots, z^*_{m+n}\z^*$.
By the Karush-Kuhn-Tucker conditions for \eqref{eq:dnn_relaxation} and \eqref{eq:spn_relaxation},   
    $Z^*$ is an optimal solution of \eqref{eq:dnn_relaxation} with $\theta = 0$
    if there exists a feasible point $(\mu_0, \lambda, \w)$ of \eqref{eq:spn_relaxation} such that   
\begin{align}
    O
    &= Z^*\left[
        I + \sum_{(i,j) \in \Omega} w_{i,j} \left(2A_{ij} E_{1,1} - E_{i,m+j}\right)
            + \mu_0 E_{11} + \lambda \left(I - rE_{11}\right) 
            \right] \nonumber \\
    &= Z^*\left[
        \left(1 + \lambda\right)I - \sum_{(i,j) \in \Omega} w_{i,j} E_{i,m+j} + \left(\mu_0 - r \lambda + \sum_{(i,j) \in \Omega} 2A_{ij} w_{i,j}\right)E_{11} \right] \nonumber \\
    &= \left(1 + \lambda\right) \begin{bmatrix} \z^* & z^*_2 \z^* & \cdots & z^*_{m+n}\z^* \end{bmatrix}
        - \sum_{(i,j) \in \Omega} w_{i,j}
            \begin{bNiceMatrix}[nullify-dots,first-row,code-for-first-row=\scriptstyle,margin]
                \RowStyle{\rotate}
                & & (i & & & & (m+j & & \\
                \Cdots & \0 & z^*_{m+j}\z^* & \0 & \Cdots & \0 & z^*_i\z^* & \0 & \Cdots
            \end{bNiceMatrix} \nonumber \\
    &\qquad 
        + \left(\mu_0 - r\lambda + \sum_{(i,j) \in \Omega} 2A_{ij} w_{i,j}\right) \begin{bmatrix} \z^* & \0 & \cdots & \0 \end{bmatrix}. 
        \label{eq:kkt_condition_for_zhat}
\end{align}
The last equality follows from $z^*_1 = 1$.   
Comparing the columns in \eqref{eq:kkt_condition_for_zhat}, we obtain the linear system 
\begin{subequations} \label{eq:linear_system_for_feasibility}
    \begin{empheq}[left = {\empheqlbrace \;}, right = {}]{alignat=4}
        &\sum_{(1, j) \in \Omega} \alpha_{1,j} z^*_{m+j} & &+ \beta & &= -\left(1 + \lambda\right), \label{eq:linear_system_for_feasibility_1} \\
        &\sum_{(i, j) \in \Omega} \alpha_{i,j} z^*_{m+j} & &        & &= -\left(1 + \lambda\right)z^*_i, & \quad & i = 2,\ldots,m, \label{eq:linear_system_for_feasibility_i} \\
        &\sum_{(i, j) \in \Omega} \alpha_{i,j} z^*_i     & &        & &= -\left(1 + \lambda\right)z^*_{m+j}, & \quad & j = 1,\ldots,n,\label{eq:linear_system_for_feasibility_mpj} \\
        & && && \alpha_{i,j} \in \Real, \; (i,j) \in \Omega, & \quad & \beta \in \Real,\; \lambda \in \Real_+, \label{eq:linear_system_for_feasibility_domain}
    \end{empheq}
\end{subequations}
where $\alpha_{i,j} \coloneqq - w_{i,j}$ and $\beta \coloneqq \mu_0 - r\lambda + \sum_{(i,j) \in \Omega} 2A_{ij} w_{i,j}$.    
Under this change of variables, the matrix $S_0(\mu_0, \lambda, \w)$ defined in \eqref{eq:dual_matrix_in_spn_relaxation_with_w}
    can be equivalently written as
\[
    S_0(\boldsymbol{\alpha}, \beta, \lambda) \coloneqq
        (1 + \lambda)I + \sum\limits_{(i,j) \in \Omega} \alpha_{i,j} E_{i,m+j} + \beta E_{11}.
\]
Although the linear system~\eqref{eq:linear_system_for_feasibility} includes the variable $\lambda$, its role is inessential.
The following lemma shows that we may take $\lambda = 0$ without loss of generality.
\begin{lemma} \label{lem:meaning_of_lambda}
    System~\eqref{eq:linear_system_for_feasibility} admits a feasible solution $(\overline{\boldsymbol{\alpha}}, \overline{\beta}, \overline{\lambda})$
    satisfying $S_0(\overline{\boldsymbol{\alpha}}, \overline{\beta}, \overline{\lambda}) \in \KC^*$ 
      for some $\overline{\lambda} \geq 0$ 
    if and only if it admits a feasible solution $(\widehat{\boldsymbol{\alpha}},\widehat{\beta},0)$ satisfying $S_0(\widehat{\boldsymbol{\alpha}},\widehat{\beta},0) \in \KC^*$. 
    Moreover, if the former solution satisfies $\overline{\alpha}_{i,j}\le \rho$ for all $(i,j)\in\Omega$,
        then the latter solution can be chosen to satisfy $\widehat{\alpha}_{i,j}\le \rho$ for all $(i,j)\in\Omega$.
\end{lemma}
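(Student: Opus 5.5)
The natural approach is to rescale. System~\eqref{eq:linear_system_for_feasibility} is linear and homogeneous in $(\boldsymbol{\alpha},\beta)$ once $1+\lambda$ is treated as a scalar multiplier, and the cones $\KC^*$ ($\SymMat_+^{m+n}$, $\SPN^{m+n}$ or $\COP^{m+n}$) are cones. The ``if'' direction is trivial: take $\overline{\lambda}=0$.

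For the ``only if'' direction, let $(\overline{\boldsymbol{\alpha}},\overline{\beta},\overline{\lambda})$ be feasible with $S_0(\overline{\boldsymbol{\alpha}},\overline{\beta},\overline{\lambda})\in\KC^*$. Set $t\coloneqq 1/(1+\overline{\lambda})\in(0,1]$ and define
\[
\widehat{\boldsymbol{\alpha}}\coloneqq t\,\overline{\boldsymbol{\alpha}}, \qquad \widehat{\beta}\coloneqq t\,\overline{\beta}.
\]
Multiplying each equation \eqref{eq:linear_system_for_feasibility_1}--\eqref{eq:linear_system_for_feasibility_mpj} by $t$ turns its right-hand side into $-z^*_i$ or $-z^*_{m+j}$, which is the right-hand side with $\lambda=0$. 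Hence $(\widehat{\boldsymbol{\alpha}},\widehat{\beta},0)$ satisfies the system. Moreover,
\[
S_0(\widehat{\boldsymbol{\alpha}},\widehat{\beta},0)=I+\sum_{(i,j)\in\Omega} t\,\overline{\alpha}_{i,j}E_{i,m+j}+t\,\overline{\beta}E_{11}=t\,S_0(\overline{\boldsymbol{\alpha}},\overline{\beta},\overline{\lambda}),
\]
since $t(1+\overline{\lambda})=1$. Because $\KC^*$ is a cone and $t>0$, this matrix lies in $\KC^*$.

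For the ``moreover'' part, suppose $\overline{\alpha}_{i,j}\le\rho$ for all $(i,j)\in\Omega$. If $\overline{\alpha}_{i,j}\ge 0$, then $\widehat{\alpha}_{i,j}=t\,\overline{\alpha}_{i,j}\le\overline{\alpha}_{i,j}\le\rho$, using $t\le 1$. If $\overline{\alpha}_{i,j}<0$, then $\widehat{\alpha}_{i,j}<0<\rho$, using $\rho>0$. Either way $\widehat{\alpha}_{i,j}\le\rho$.

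There is no real obstacle here. The only points needing care are that the cone property of $\KC^*$ is what allows the rescaling, and that $t\le 1$ together with $\rho>0$ is what preserves the bound $\widehat{\alpha}_{i,j}\le\rho$.
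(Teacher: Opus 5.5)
Your proposal is correct and follows the paper's proof: rescale $(\overline{\boldsymbol{\alpha}},\overline{\beta})$ by $1/(1+\overline{\lambda})$, use linearity of the system and the cone property of $\KC^*$ to get $S_0(\widehat{\boldsymbol{\alpha}},\widehat{\beta},0)=\frac{1}{1+\overline{\lambda}}S_0(\overline{\boldsymbol{\alpha}},\overline{\beta},\overline{\lambda})\in\KC^*$. Your case split for the bound $\widehat{\alpha}_{i,j}\le\rho$ is an unpacked version of the paper's one-line chain $\overline{\alpha}_{i,j}/(1+\overline{\lambda})\le\rho/(1+\overline{\lambda})\le\rho$, and both use $\rho>0$.
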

\begin{proof}
    The ``if'' direction is immediate, since $\overline{\lambda}=0$ is a special case of $\overline{\lambda} \geq 0$.  
    For the converse, 
        let $(\overline{\boldsymbol{\alpha}}, \overline{\beta}, \overline{\lambda})$ be
        a feasible solution of \eqref{eq:linear_system_for_feasibility} such that
        $S_0(\overline{\boldsymbol{\alpha}}, \overline{\beta}, \overline{\lambda}) \in \KC^*$.   
    Define   
    \[
        \widehat{\alpha}_{i,j} \coloneqq \frac{\overline{\alpha}_{i,j}}{1+\overline{\lambda}} \quad \text{for all $(i,j) \in \Omega$,}
        \qquad \widehat{\beta} \coloneqq \frac{\overline{\beta}}{1+\overline{\lambda}}.
    \]
    Since $1+\overline{\lambda} > 0$,  dividing \eqref{eq:linear_system_for_feasibility_1}--\eqref{eq:linear_system_for_feasibility_mpj}
        by $1+\overline{\lambda}$ shows that $(\widehat{\boldsymbol{\alpha}}, \widehat{\beta}, 0)$ satisfies \eqref{eq:linear_system_for_feasibility}.
            Moreover, if $\overline{\alpha}_{i,j}\le\rho$ for every $(i,j)\in\Omega$, then
    \[
        \widehat{\alpha}_{i,j} = \frac{\overline{\alpha}_{i,j}}{1+\overline{\lambda}} \le \frac{\rho}{1+\overline{\lambda}} \le \rho,
    \]
        and hence the bound is preserved.
    Finally,  
    \[
        S_0(\widehat{\boldsymbol{\alpha}}, \widehat{\beta}, 0)
            = I + \sum_{(i,j) \in \Omega} \widehat{\alpha}_{i,j}E_{i,m+j} + \widehat{\beta}E_{11}
            = \frac{1}{1+\overline{\lambda}} S_0(\overline{\boldsymbol{\alpha}}, \overline{\beta}, \overline{\lambda}) \in \KC^*.
    \]
    Thus, $(\widehat{\boldsymbol{\alpha}},\widehat{\beta},0)$ is a feasible solution of~\eqref{eq:linear_system_for_feasibility}
    whose slack matrix belongs to $\KC^*$.
\end{proof}
\noindent
Therefore, if a triple $(\overline{\boldsymbol{\alpha}},\overline{\beta},\overline{\lambda}) \in \Real^{|\Omega|}\times\Real\times\Real_+$ satisfies \eqref{eq:linear_system_for_feasibility}
and
$S_0(\overline{\boldsymbol{\alpha}},\overline{\beta},\overline{\lambda}) \in \KC^*$, then $Z^*$ is an optimal solution of \eqref{eq:dnn_relaxation}. 
This yields the following sufficient condition for tightness.

\begin{prop} \label{prop:tightness_on_system}
    Let $\z^*$ be an optimal solution of \eqref{eq:precise_qcqpform_dnn_completion},
        $Z^* \coloneqq \z^*\trans{\left(\z^*\right)}$,
        and let $\KC$ be either $\DNN^{m+n}$ or $\SymMat_+^{m+n}$. 
    Suppose there exists a feasible solution 
    $(\overline{\boldsymbol{\alpha}},\overline{\beta},\overline{\lambda})$
    of \eqref{eq:linear_system_for_feasibility} such that
    $S_0(\overline{\boldsymbol{\alpha}},\overline{\beta},\overline{\lambda}) \in \KC^*$.
    Then, the following properties hold:
    \begin{enumerate}[label={(\alph*)}]
    \item
        for the equality formulation~\eqref{eq:precise_qcqpform_dnn_completion},
        the relaxation~\eqref{eq:dnn_relaxation} with $\theta = 0$ admits $Z^*$ as an optimal solution and is tight;
    \item
        for the penalized inequality formulation~\eqref{eq:qcqpform_dnn_completion},
        if, in addition,
        \[
            \overline{\alpha}_{i,j} \le \rho \quad \forall (i,j)\in\Omega,
        \]
        then
        the relaxation~\eqref{eq:dnn_relaxation} with $\theta = 1$ also admits $Z^*$ as an optimal solution and is tight.
    \end{enumerate}
\end{prop}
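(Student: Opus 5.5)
By Lemma~\ref{lem:meaning_of_lambda}, I may assume $\overline{\lambda}=0$ throughout, and in case (b) still keep $\overline{\alpha}_{i,j}\le\rho$. The plan is to build an explicit dual feasible point of \eqref{eq:spn_relaxation} from $(\overline{\boldsymbol{\alpha}},\overline{\beta})$. I will then show that its objective value $-\mu_0$ equals the primal value $\ip{Q_\theta}{Z^*}$ at the feasible point $Z^*$. Weak duality then gives optimality of $Z^*$. Since $Z^*$ has rank one, the relaxation is tight.

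For (a), with $\theta=0$, set $w_{i,j}\coloneqq-\overline{\alpha}_{i,j}$ (free sign), $\lambda\coloneqq 0$ and $\mu_0\coloneqq\overline{\beta}-\sum_{(i,j)\in\Omega}2A_{ij}w_{i,j}$. By construction, the dual slack matrix $S_0(\mu_0,0,\w)$ coincides with $S_0(\overline{\boldsymbol{\alpha}},\overline{\beta},0)\in\KC^*$, so the point is dual feasible.

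Next I check zero duality gap. The system \eqref{eq:linear_system_for_feasibility} is exactly the column-wise form of \eqref{eq:kkt_condition_for_zhat}, hence $Z^*S_0=O$ and in particular $\ip{S_0}{Z^*}=0$. Expanding, and using $Z^*_{11}=1$ together with $Z^*_{i,m+j}=z^*_iz^*_{m+j}=A_{ij}$ (the equality constraints), gives
\[
0=\ip{I}{Z^*}-\sum_{(i,j)\in\Omega} 2w_{i,j}A_{ij}+\mu_0+\sum_{(i,j)\in\Omega}2A_{ij}w_{i,j}=\ip{I}{Z^*}+\mu_0 .
\]
So $-\mu_0=\ip{Q_0}{Z^*}$, and the gap is zero. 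The only care needed is the convention $\ip{E_{i,m+j}}{Z}=2Z_{i,m+j}$, which is what makes the $A_{ij}$-terms cancel.

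For (b), with $\theta=1$, I reuse the same $(\overline{\boldsymbol{\alpha}},\overline{\beta})$. Now I must choose $w_{i,j}\ge 0$ with $\rho-w_{i,j}=\overline{\alpha}_{i,j}$, i.e.\ $w_{i,j}\coloneqq\rho-\overline{\alpha}_{i,j}$. This is nonnegative exactly by the extra hypothesis $\overline{\alpha}_{i,j}\le\rho$, and this sign requirement is the step where (b) genuinely needs more than (a). With $\mu_0$ again defined so that the $E_{11}$ coefficient equals $\overline{\beta}$, the slack matrix $S_1$ again equals $S_0(\overline{\boldsymbol{\alpha}},\overline{\beta},0)\in\KC^*$.

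To verify the gap, note that $Z^*$ is feasible for $\theta=1$ because the equality constraints imply the inequalities. The same computation as before now includes the $\rho$-terms of $Q_1$: one gets $\ip{S_1}{Z^*}=\ip{Q_1}{Z^*}+\mu_0-\sum 2A_{ij}w_{i,j}+\sum 2A_{ij}w_{i,j}=\ip{Q_1}{Z^*}+\mu_0$. This vanishes since $Z^*S_1=O$. Complementary slackness on the inequality constraints also holds, because they are active at $Z^*$. Hence $Z^*$ is optimal for $\theta=1$ as well. The main thing I expect to check is this bookkeeping of the $\rho$-terms, which should be routine.
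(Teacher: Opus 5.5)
Your proposal is correct and matches the paper's proof: you make the same reduction to $\overline{\lambda}=0$ via Lemma~\ref{lem:meaning_of_lambda}, and you use the same dual certificates $w_{i,j}=-\overline{\alpha}_{i,j}$ for (a) and $w_{i,j}=\rho-\overline{\alpha}_{i,j}\ge 0$ for (b), with $\mu_0$ chosen so that the slack matrix equals $S_0(\overline{\boldsymbol{\alpha}},\overline{\beta},0)$. The only difference is cosmetic: you conclude via weak duality and the explicit zero-gap identity $\ip{S}{Z^*}=\ip{Q_\theta}{Z^*}+\mu_0=0$ rather than via complementary slackness and KKT, which makes clear that Slater's condition is not needed for this sufficiency direction.
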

\begin{proof}
    By Lemma~\ref{lem:meaning_of_lambda}, we may assume that $\overline{\lambda} = 0$ without loss of generality.
    In the proof of (b), the last assertion of Lemma~\ref{lem:meaning_of_lambda} ensures that
        this reduction can be made while preserving the bounds $\overline{\alpha}_{i,j} \leq \rho$ for all $(i,j) \in \Omega$.
    We first consider the primal and dual relaxations of~\eqref{eq:precise_qcqpform_dnn_completion},
        namely, \eqref{eq:dnn_relaxation} and \eqref{eq:spn_relaxation} with $\theta = 0$.
    Since $\z^*$ is feasible for \eqref{eq:precise_qcqpform_dnn_completion},
        $Z^*$ is feasible for its relaxation, namely, \eqref{eq:dnn_relaxation} with $\theta = 0$.
    Define
    \[
        \overline{w}_{i,j} \coloneqq -\overline{\alpha}_{i,j} \quad \text{for all } (i,j)\in\Omega, \qquad
        \overline{\mu}_0 \coloneqq \overline{\beta} + r\overline{\lambda}
            + \sum_{(i,j)\in\Omega}2A_{ij}\overline{\alpha}_{i,j}.
    \]
    Then, we have
    \[
        S_0(\overline{\mu_0},\overline{\lambda},\overline{\w})
        = S_0(\overline{\boldsymbol{\alpha}},\overline{\beta},\overline{\lambda}) \in \KC^*.
    \]
    Hence, $(\overline{\mu_0},\overline{\lambda},\overline{\w})$ is feasible for the dual relaxation~\eqref{eq:spn_relaxation}.
    By construction, the linear system~\eqref{eq:linear_system_for_feasibility} is precisely the columnwise form of
        $Z^*S_0(\overline{\boldsymbol{\alpha}},\overline{\beta},\overline{\lambda})=O$,
        and therefore $Z^*S_0(\overline{\mu_0},\overline{\lambda},\overline{\w})=O$.
    For every $(i,j) \in \Omega$,
        $\ip{\left(2A_{ij}E_{11} - E_{i,m+j}\right)}{Z^*} = 0$
        implies $\overline{w}_{i,j}\left[\ip{\left(2A_{ij}E_{11} - E_{i,m+j}\right)}{Z^*}\right] = 0$.
    In addition, it follows from $\overline{\lambda} = 0$
        that $\overline{\lambda}\left[\ip{\left( I - rE_{11} \right)}{Z^*}\right] = 0$.
    Thus, $Z^*$ and $(\overline{\mu_0},\overline{\lambda},\overline{\w})$ satisfy the complementary slackness condition for the conic constraint.
    Since Slater's condition holds for \eqref{eq:dnn_relaxation} by Section~\ref{ssec:sdp_relaxation},
        the KKT conditions are sufficient for optimality.
    Therefore, $Z^*$ is an optimal solution of \eqref{eq:dnn_relaxation}.
    Since $Z^*$ is of rank-one, \eqref{eq:dnn_relaxation} is tight for \eqref{eq:precise_qcqpform_dnn_completion}.

    We next show that $Z^*$ is also an optimal solution of the relaxation of \eqref{eq:qcqpform_dnn_completion},
        \textit{i.e.}, the case $\theta = 1$.
    Since the equality constraints of \eqref{eq:precise_qcqpform_dnn_completion} imply the corresponding inequalities of \eqref{eq:qcqpform_dnn_completion},
        $Z^*$ is feasible for \eqref{eq:dnn_relaxation} with $\theta=1$.
    Define
    \[
        \widehat{\mu}_0 \coloneqq \overline{\mu}_0 - \sum_{(i,j) \in \Omega} 2A_{ij}\rho, \qquad
        \widehat{\lambda} \coloneqq 0, \qquad
        \widehat{\w} \coloneqq \overline{\w} + \rho\1.
    \]
    Substituting them into $S_1$ yields
    \begin{align*}
        S_1(\widehat{\mu}_0, \widehat{\lambda}, \widehat{\w})
        &= I + \sum_{(i,j)\in\Omega}\left(\rho-\widehat{w}_{i,j}\right)E_{i,m+j}
            + \left(\widehat{\mu}_0 + \sum_{(i,j)\in\Omega}2A_{ij}\widehat{w}_{i,j}\right)E_{11} \\
        &= I - \sum_{(i,j)\in\Omega}\overline{w}_{i,j}E_{i,m+j}
            + \left(\overline{\mu}_0 + \sum_{(i,j)\in\Omega}2A_{ij}\overline{w}_{i,j}\right)E_{11} \\
        &= S_0(\overline{\mu}_0,0,\overline{\w}) \in \KC^*.
    \end{align*}
    Moreover, for every $(i,j)\in\Omega$, the additional condition gives
    \[
        \widehat{w}_{i,j}=\overline{w}_{i,j}+\rho=-\overline{\alpha}_{i,j}+\rho \ge 0.
    \]
    Thus, $(\widehat{\mu}_0,\widehat{\lambda},\widehat{\w})$ is feasible for \eqref{eq:spn_relaxation} with $\theta = 1$.
    Since $S_1(\widehat{\mu}_0,\widehat{\lambda},\widehat{\w})=S_0(\overline{\mu}_0,0,\overline{\w})$, the relation
    $Z^*S_1(\widehat{\mu}_0,\widehat{\lambda},\widehat{\w})=O$ holds.
    The remaining complementary slackness relations hold since the corresponding inequalities are active at $Z^*$ and $\widehat{\lambda}=0$.
    Therefore, the same KKT argument as above shows that $Z^*$ is an optimal solution of \eqref{eq:dnn_relaxation} with $\theta=1$.
\end{proof}

\noindent
By Proposition~\ref{prop:tightness_on_system}, for both $\theta = 0$ and $\theta = 1$,
    it suffices to consider the slack matrix $S_0$ rather than $S_1$ when proving the tightness of the relaxation~\eqref{eq:dnn_relaxation}.
Hence, throughout the remainder of this section, 
    we omit the subscript and write $S$ in place of $S_0$ whenever no confusion arises.

The following lemma will be used repeatedly
    in the subsequent tightness proofs to show that the slack matrix $S(\boldsymbol{\alpha}, \beta, \lambda)$ lies in $\KC^*$.
\begin{lemma} \label{lem:nonpositive_certificate}
    Suppose that $m = n$.
    Let $(\boldsymbol{\alpha},\beta,0)$ be a feasible point of \eqref{eq:linear_system_for_feasibility}.
    If $\boldsymbol{\alpha} \leq \0$, then $S(\boldsymbol{\alpha},\beta,0) \succeq O$.
\end{lemma}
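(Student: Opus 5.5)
The plan is to recognize $S(\boldsymbol{\alpha},\beta,0)$ as a symmetric Z-matrix with a strictly positive vector in its kernel, and then to show positive semidefiniteness by a diagonal scaling argument. The hypothesis $m=n$ does not seem to play a role in this argument. I expect it is only the standing setting of this section.

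\emph{Step 1: $\z^*$ lies in the kernel of $S$.}
Write $S \coloneqq S(\boldsymbol{\alpha},\beta,0) = I + \sum_{(i,j)\in\Omega}\alpha_{i,j}E_{i,m+j} + \beta E_{11}$. By construction, system~\eqref{eq:linear_system_for_feasibility} with $\lambda = 0$ is exactly the columnwise form of $Z^*S = O$, as derived in \eqref{eq:kkt_condition_for_zhat}. Since $Z^* = \z^*\trans{(\z^*)}$ and $S$ is symmetric, this gives $\z^*\trans{(S\z^*)} = O$. Because $\z^* \neq \0$ (indeed $z^*_1 = 1$), we conclude $S\z^* = \0$. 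One can also check directly that \eqref{eq:linear_system_for_feasibility_1}--\eqref{eq:linear_system_for_feasibility_mpj} read row by row as $(S\z^*)_k = 0$. By Assumption~\ref{asm:connected}, every component of $\z^*$ is strictly positive, as noted after that assumption.

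\emph{Step 2: diagonal scaling.}
Let $D \coloneqq \diag(z^*_1,\ldots,z^*_{m+n})$, which is positive definite, and set $T \coloneqq DSD$. The off-diagonal entries of $T$ are $T_{kl} = z^*_k S_{kl} z^*_l$. These are nonpositive, because the off-diagonal entries of $S$ are either $0$ or some $\alpha_{i,j} \le 0$. The row sums of $T$ are $(DSD\1)_k = z^*_k (S\z^*)_k = 0$. Hence each diagonal entry satisfies
\[
T_{kk} = -\sum_{l\neq k} T_{kl} = \sum_{l\neq k}\lvert T_{kl}\rvert \ge 0 ,
\]
so $T$ is symmetric and weakly diagonally dominant with nonnegative diagonal. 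By Gershgorin's theorem, $T \succeq O$.

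Equivalently, one can verify the quadratic form directly:
\[
\trans{\x}T\x = \tfrac12\sum_{k\neq l}(-T_{kl})(x_k-x_l)^2 \ge 0 .
\]
Since $S = D^{-1}TD^{-1}$ is congruent to $T$, it follows that $S \succeq O$.

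The only delicate point is Step 1: confirming that the linear system really encodes $S\z^* = \0$, including the first row, where $\beta$ enters. That row is \eqref{eq:linear_system_for_feasibility_1}, and it uses $z^*_1 = 1$. The positivity of $\z^*$ is also essential for the scaling in Step 2. Both facts are supplied by the earlier discussion, so I expect no genuine obstacle.
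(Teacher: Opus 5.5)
Your proof is correct, but it takes a shorter route than the paper's.

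The paper first writes $S(\boldsymbol{\alpha},\beta,0)$ in block form $\begin{bmatrix} D & B\\ B^{\top} & I\end{bmatrix}$ with $D=\diag(1+\beta,1,\ldots,1)$. It removes the identity block by a Schur complement and then works with the $n\times n$ matrix $M = D - BB^{\top}$:
\begin{itemize}
\item the block equations $B^{\top}\x^* = -\y^*$ and $B\y^* = -D\x^*$ give $M\x^*=0$;
\item nonpositivity of $B$ gives $M_{ij} = -\sum_k B_{ik}B_{jk}\le 0$ for $i\neq j$;
\item scaling by $\diag(\x^*)$ turns $M$ into a weighted Laplacian.
\end{itemize}

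You instead observe that $S$ itself is already a symmetric matrix with nonpositive off-diagonal entries (each is $0$ or some $\alpha_{i,j}\le 0$). It annihilates the positive vector $\z^*$. So the same Laplacian (diagonal-dominance) argument applies directly to the full $2n\times 2n$ matrix after scaling by $\diag(\z^*)$.

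Your version avoids the Schur complement and the block bookkeeping. It also makes plain that the only ingredients are $S\z^*=0$, the sign pattern, and $\z^*>0$, so neither $m=n$ nor the bipartite block structure plays any role. The paper's argument does not really need $m=n$ either.

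The paper's version works in half the dimension and uses positivity of $\x^*$ only, not of $\y^*$. Its reduced matrix $D-BB^{\top}$ is also the same object the paper computes explicitly, as a nonnegative multiple of a rank-one matrix, in the $2\times 2$ example.

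Both proofs rest on the same key fact: a symmetric matrix with nonpositive off-diagonal entries and a positive null vector is positive semidefinite. The difference is one of economy rather than substance.
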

\begin{proof}
    Let $D \coloneqq \diag(1+\beta,1,\dots,1)$ and define $B \in \Real^{n \times n}$ by
    \[
        B_{ij} = \begin{cases}
            \alpha_{i, j} &\text{if $(i,j) \in \Omega$}, \\
            0             &\text{otherwise}.
        \end{cases}
    \]
    Then, the slack matrix can be represented by a $2 \times 2$ block matrix, as follows:
    \[   
        S(\boldsymbol{\alpha},\beta,0) = \begin{bmatrix} D & B\\ \trans{B} & I \end{bmatrix}.
    \]
    Since the lower-right block is the identity matrix, the Schur complement yields
    \[
        S(\boldsymbol{\alpha},\beta,0) \succeq O \quad\iff\quad  M \coloneqq D - B\trans{B} \succeq O.
    \]
    Write $\z^* = [\x^*; \y^*] \in \Real^{2n}$ so that
        the linear system~\eqref{eq:linear_system_for_feasibility_n} is equivalent to
    \[
        \trans{B}\x^* = -\y^*, \qquad B\y^* = -D\x^*.
    \]
    Hence, we have
    \[
        M\x^* = D\x^* - B(\trans{B}\x^*) = D\x^* + B\y^* = 0.
    \]
    Since $\boldsymbol{\alpha} \leq \0$, the matrix $B$ is elementwise nonpositive.
    Hence, for every $i \neq j$, we have
    \[
        M_{ij} = - \sum_{k=1}^n B_{ik} B_{jk} \leq 0.
    \]
    Let $U \coloneqq \diag(x^*_1,\ldots,x^*_n)$.
    Since all elements of $\x^*$ are positive in the present setting, the matrix $U$ is nonsingular.
    Then, $L \coloneqq U M U \in \SymMat^n$ has nonpositive off-diagonal elements and satisfies
    \[
        L\1 = U M \x^* = 0.
    \]
    Thus, $L$ is a weighted Laplacian matrix of an undirected graph, and therefore $L \succeq O$ (see~\cite{Merris1994laplacian}).
    Since $M = U^{-1} L U^{-1}$, it follows that $M \succeq O$,
        and hence $S(\boldsymbol{\alpha},\beta,0) \succeq O$.
\end{proof}

\subsection{DNN completion problem with a cycle graph}

We first examine the case where the bipartite graph determined by  $\Omega$ is a cycle, 
as this is the fundamental sparsity pattern studied in this paper.
The cycle structure is sparse enough to permit an explicit analysis of the feasibility system, 
yet structured enough to make positive semidefiniteness of the slack matrix nontrivial.
We therefore begin with the smallest case to illustrate the main idea and then turn to the general $n \times n$ setting.  

A DNN matrix completion problem with $m = n = 2$ can be formulated as \eqref{eq:qcqpform_dnn_completion} with $\z \in \Real^4$.
In this example, we set $\Omega = \left\{ (1, 1), (1, 2), (2, 1), (2, 2) \right\}$.
Then, the bipartite graph $(\{1,2\},\{1,2\},\Omega)$ is a cycle.
The linear system~\eqref{eq:linear_system_for_feasibility} with $\lambda = 0$ becomes 
\[
    \begin{bmatrix}
        z^*_3 & z^*_4 & 0 & 0 & 1 \\
        0 & 0 & z^*_3 & z^*_4 & 0 \\
        z^*_1 & 0 & z^*_2 & 0 & 0 \\
        0 & z^*_1 & 0 & z^*_2 & 0
    \end{bmatrix}
    \begin{bmatrix} \alpha_{1,1} \\ \alpha_{1,2} \\ \alpha_{2,1} \\ \alpha_{2,2} \\ \beta \end{bmatrix} = -\z^*.
\]
After a suitable left multiplication, this system can be rewritten as
\[
    \begin{bmatrix}
        \frac{z^*_3}{z^*_1} & \frac{z^*_4}{z^*_1} & 0 & 0 & 1 \\
        0 & 0 & \frac{z^*_3}{z^*_2} & \frac{z^*_4}{z^*_2} & 0 \\
        0 & 0 & 0 & 0 & 1 \\
        0 & \frac{z^*_1}{z^*_4} & 0 & \frac{z^*_2}{z^*_4} & 0
    \end{bmatrix}
    \begin{bmatrix} \alpha_{1,1} \\ \alpha_{1,2} \\ \alpha_{2,1} \\ \alpha_{2,2} \\ \beta \end{bmatrix} =
    \begin{bmatrix} -1 \\ -1 \\ - \left(z^*_1\right)^2 - \left(z^*_2\right)^2 + \left(z^*_3\right)^2 + \left(z^*_4\right)^2 \\ -1 \end{bmatrix}.
\]  
Thus, $\beta = - \left(z^*_1\right)^2 - \left(z^*_2\right)^2 + \left(z^*_3\right)^2 + \left(z^*_4\right)^2$ is uniquely determined.
To analyze the system, suppose that $z^*_{2+j} / z^*_i \in \left[1, 1 + \delta\right]$ for all $(i, j) \in \Omega$ and $\delta >0$. 
Substituting the expression for $\beta$ into the first equation yields
\[
    \left(z^*_2\right)^2 - \left(z^*_3\right)^2 - \left(z^*_4\right)^2 = z^*_3\alpha_{1,1} + z^*_4\alpha_{1,2}.
\]
Since $z^*_3/z^*_2 \ge 1$ and $z^*_4/z^*_2 \ge 1$, the left-hand side is nonpositive. 
Therefore, the equation admits a solution with $\alpha_{1,1},\alpha_{1,2}\le \rho$.
Since $\alpha_{1,1}, \alpha_{1,2} \leq \rho$ and $z^*_3, z^*_4$ are nonnegative by assumption,
    the above equality admits a solution $(\alpha_{1,1},\alpha_{1,2})$.    
Once $\alpha_{1,1}$ and $\alpha_{1,2}$ are fixed, the remaining variables are determined accordingly.  
Indeed, we can take
\begin{align*}
    \alpha_{1,1} &\coloneqq -\frac{-(z^*_2)^2 + (z^*_3)^2 + (z^*_4)^2}{(z^*_3)^2 + (z^*_4)^2}z^*_3, &
    \alpha_{1,2} &\coloneqq -\frac{-(z^*_2)^2 + (z^*_3)^2 + (z^*_4)^2}{(z^*_3)^2 + (z^*_4)^2}z^*_4, \\
    \alpha_{2,1} &\coloneqq -\frac{z^*_2 z^*_3}{(z^*_3)^2 + (z^*_4)^2}, &
    \alpha_{2,2} &\coloneqq -\frac{z^*_2 z^*_4}{(z^*_3)^2 + (z^*_4)^2}.
\end{align*}
Note that all elements of $\boldsymbol{\alpha}$ are nonpositive.
In addition, $\boldsymbol{\alpha}$ must satisfy $S(\boldsymbol{\alpha},\beta,0)\succeq O$. The slack matrix can be written as
\[
    S(\alpha,\beta,0)=
    \begin{bNiceArray}{cc|cc}[margin]
        - (z^*_2)^2 +(z^*_3)^2 + (z^*_4)^2 & 0 & \alpha_{1,1} & \alpha_{1,2} \\
        0 & 1 & \alpha_{2,1} & \alpha_{2,2}\\
        \hline
        \alpha_{1,1} & \alpha_{2,1} & 1 & 0\\
        \alpha_{1,2} & \alpha_{2,2} & 0 & 1
    \end{bNiceArray} \eqqcolon \begin{bmatrix}
        D & B \\
        \trans{B} & I
    \end{bmatrix}.
\]  
By the Schur complement, $S(\alpha,\beta,0) \succeq O$ if and only if   
\begin{align*}
    O \preceq D - B\trans{B}
    &= \frac{- (z^*_2)^2 + (z^*_3)^2 + (z^*_4)^2}{(z^*_3)^2 + (z^*_4)^2}
        \begin{bmatrix} (z^*_2)^2 & -z^*_2 \\ -z^*_2 & 1 \end{bmatrix} \\
    &= \frac{- (z^*_2)^2 + (z^*_3)^2 + (z^*_4)^2}{(z^*_3)^2 + (z^*_4)^2}
        \begin{bmatrix} z^*_2 \\ -1 \end{bmatrix}\trans{\begin{bmatrix} z^*_2 \\ -1 \end{bmatrix}}.
\end{align*}
Under the assumption $z^*_3/z^*_2 \ge 1$ and $z^*_4/z^*_2 \ge 1$, we have
  $-(z^*_2)^2+(z^*_3)^2+(z^*_4)^2 \ge 0$.
  Hence $D-B\trans{B}\succeq O$, and therefore
  $S(\boldsymbol{\alpha},\beta,0)\succeq O$.
Therefore, Proposition~\ref{prop:tightness_on_system} establishes the tightness of \eqref{eq:dnn_relaxation} under both $\theta = 0$ and $\theta = 1$.
The following proposition summarizes the preceding argument. 
\begin{prop}
    Let $\z^*$ be an optimal solution of \eqref{eq:precise_qcqpform_dnn_completion}
        for a $2 \times 2$ DNN completion problem, \textit{i.e.}, $m = n = 2$,
        and $\KC$ be either $\SymMat_+^{4}$ or $\DNN^{4}$.
    Suppose that $\Omega = \left\{ (1, 1), (1, 2), (2, 1), (2, 2) \right\}$.
    If $z^*_{2+j} / z^*_i \geq 1$ for all $(i, j) \in \Omega$,
        the relaxation~\eqref{eq:dnn_relaxation} admits $\z^*\trans{\left(\z^*\right)}$ as a rank-one optimal solution
        for each $\theta \in \{0, 1\}$.
    Consequently, the relaxations of \eqref{eq:precise_qcqpform_dnn_completion} and \eqref{eq:qcqpform_dnn_completion} are both tight.
\end{prop}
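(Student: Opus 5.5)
The plan is to verify the hypotheses of Proposition~\ref{prop:tightness_on_system}: we exhibit an explicit feasible point $(\boldsymbol{\alpha},\beta,0)$ of the linear system~\eqref{eq:linear_system_for_feasibility}, show that its slack matrix lies in $\KC^*$, and check the bound $\alpha_{i,j}\le\rho$. We take $\lambda=0$, which Lemma~\ref{lem:meaning_of_lambda} allows without loss of generality, and use the explicit choice already displayed:
\[
\beta=-(z^*_1)^2-(z^*_2)^2+(z^*_3)^2+(z^*_4)^2, \qquad
\alpha_{1,j}=-\frac{\gamma\, z^*_{2+j}}{s}, \qquad
\alpha_{2,j}=-\frac{z^*_2 z^*_{2+j}}{s},
\]
where
\[
s\coloneqq (z^*_3)^2+(z^*_4)^2, \qquad \gamma\coloneqq s-(z^*_2)^2 .
\]

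The first step is to substitute into the four equations and use $z^*_1=1$; this is routine.
\begin{itemize}
\item Row $1$ reads $-\gamma+\beta=-1$, which holds by the definition of $\beta$.
\item Row $2$ reads $-z^*_2 s/s=-z^*_2$.
\item The column equations for $j=1,2$ read $-z^*_{2+j}(\gamma+(z^*_2)^2)/s=-z^*_{2+j}$.
\end{itemize}

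The second step is sign information. The hypothesis $z^*_{2+j}/z^*_i\ge 1$, taken with $i=2$, gives $(z^*_3)^2,(z^*_4)^2\ge (z^*_2)^2$. Hence $\gamma\ge (z^*_2)^2>0$, since all components of $\z^*$ are positive under Assumption~\ref{asm:connected}. Therefore $\boldsymbol{\alpha}\le\0$, and in particular $\alpha_{i,j}\le 0<\rho$, so the extra condition of Proposition~\ref{prop:tightness_on_system}(b) holds.

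The third step is the conic condition, which is the only substantive point. Since $m=n=2$ and $\boldsymbol{\alpha}\le\0$, Lemma~\ref{lem:nonpositive_certificate} immediately gives $S(\boldsymbol{\alpha},\beta,0)\succeq O$. Alternatively, the Schur-complement computation above gives $D-B\trans{B}=(\gamma/s)\,[z^*_2;-1]\trans{[z^*_2;-1]}\succeq O$.

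Positive semidefiniteness settles the case $\KC=\SymMat_+^4$, where $\KC^*=\SymMat_+^4$. For $\KC=\DNN^4$ we have $\KC^*=\SPN^4\supseteq\SymMat_+^4$, so membership follows as well. Proposition~\ref{prop:tightness_on_system}(a),(b) then yields that $\z^*\trans{(\z^*)}$ is optimal for $\theta\in\{0,1\}$, and hence both relaxations are tight.

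The main potential obstacle is ensuring positivity of $\gamma$, which is needed both for the signs of the $\alpha_{1,j}$ and for the PSD certificate. This is exactly where the ratio hypothesis enters, and only its instances with $i=2$ are needed.
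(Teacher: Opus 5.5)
Your proposal is correct and is essentially the paper's own argument: the same certificate $(\boldsymbol{\alpha},\beta,0)$, the same sign check, and the same rank-one Schur complement $D-B\trans{B}=(\gamma/s)\,[z^*_2;-1]\trans{[z^*_2;-1]}$, followed by Proposition~\ref{prop:tightness_on_system}. Your alternative appeal to Lemma~\ref{lem:nonpositive_certificate} is also valid here, since $\Omega=\Omega_{\text{cycle}}$ when $n=2$. Your remark that only the instances $z^*_3,z^*_4\ge z^*_2$ are used is accurate; in fact only $\gamma\ge 0$ is needed.
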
  

We now extend the preceding example to the $n \times n$ DNN completion problem associated with cycle sparsity. 
Since the problem is invariant under relabeling that preserves the cycle structure, it suffices to consider the representative pattern
\[
    \Omega = \left\{ (i, i) \,\middle|\, i \in \left\{1,\ldots,n\right\} \right\}
        \cup \left\{ (i, i-1) \,\middle|\, i \in \left\{2,\ldots,n\right\} \right\}
        \cup \left\{ (1, n) \right\} \eqqcolon \Omega_{\text{cycle}}.
\]      
The linear system corresponding to this choice of $\Omega$ takes the form  
\begin{subequations} \label{eq:linear_system_for_feasibility_n}
    \begin{empheq}[left = {\empheqlbrace \;}, right = {}]{alignat=5}
        &\alpha_{1,1}   z^*_{n+1}   & &+ \alpha_{1,n}   z^*_{2n}  & &+ \beta & &= -1, \label{eq:linear_system_for_feasibility_n_1} \\
        &\alpha_{i,i-1} z^*_{n+i-1} & &+ \alpha_{i,i}   z^*_{n+i} & &        & &= -z^*_i,     & \quad & i = 2,\ldots,n, \label{eq:linear_system_for_feasibility_n_i} \\
        &\alpha_{i,i}   z^*_{i}     & &+ \alpha_{i+1,i} z^*_{i+1} & &        & &= -z^*_{n+i}, & \quad & i = 1,\ldots,n-1, \label{eq:linear_system_for_feasibility_n_npi} \\
        &\alpha_{n,n}   z^*_{n}     & &+ \alpha_{1,n}   z^*_1     & &        & &= -z^*_{2n}, \label{eq:linear_system_for_feasibility_n_2n} \\
        & \boldsymbol{\alpha} \in \Real^{|\Omega|}, && \beta \in \Real.
    \end{empheq}
\end{subequations}

To derive sufficient conditions for tightness, we prove the following lemma using \eqref{eq:linear_system_for_feasibility_n}.  
\begin{lemma} \label{lem:the_others_except_beta}
    Let $(\boldsymbol{\alpha},\beta)$ be a feasible solution of \eqref{eq:linear_system_for_feasibility_n}.    
    Then, $\alpha_{1,1}$ and $\alpha_{1,n}$ satisfy
    \begin{equation} \label{eq:difference_of_squared_sum}
        - 1 + \sum_{i=1}^n \left(z^*_i\right)^2 - \sum_{i=1}^n \left(z^*_{n+i}\right)^2 = \alpha_{1,1} z^*_{n+1} + \alpha_{1,n} z^*_{2n}.
    \end{equation}
\end{lemma}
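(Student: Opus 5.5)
The plan is to take a weighted sum of the equations in \eqref{eq:linear_system_for_feasibility_n} that makes every $\alpha$ term cancel except those in \eqref{eq:linear_system_for_feasibility_n_1}. This mirrors the $2\times 2$ example, where $\beta$ was obtained after a suitable left multiplication. Concretely, we multiply \eqref{eq:linear_system_for_feasibility_n_i} by $z^*_i$ for $i=2,\ldots,n$. We multiply \eqref{eq:linear_system_for_feasibility_n_npi} by $z^*_{n+i}$ for $i=1,\ldots,n-1$, and \eqref{eq:linear_system_for_feasibility_n_2n} by $z^*_{2n}$.

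Summing the row equations \eqref{eq:linear_system_for_feasibility_n_i} weighted by $z^*_i$ gives
\[
\sum_{i=2}^n \left(\alpha_{i,i-1} z^*_i z^*_{n+i-1} + \alpha_{i,i} z^*_i z^*_{n+i}\right) = -\sum_{i=2}^n (z^*_i)^2.
\]
Summing the column equations weighted by $z^*_{n+i}$ gives
\[
\sum_{i=1}^{n-1}\left(\alpha_{i,i} z^*_i z^*_{n+i} + \alpha_{i+1,i} z^*_{i+1}z^*_{n+i}\right) + \alpha_{n,n}z^*_nz^*_{2n} + \alpha_{1,n}z^*_1z^*_{2n} = -\sum_{i=1}^n (z^*_{n+i})^2.
\]
The point is that each $\alpha_{i,j}$ appears in the second sum with coefficient $z^*_i z^*_{n+j}$. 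In the first sum it appears with the same coefficient, except for $\alpha_{1,1}$ and $\alpha_{1,n}$, which are absent there because row $1$ is excluded.

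Subtracting the first identity from the second therefore cancels every $\alpha_{i,j}$ with $i\ge 2$. What remains is
\[
\alpha_{1,1}z^*_1z^*_{n+1} + \alpha_{1,n}z^*_1z^*_{2n} = \sum_{i=2}^n (z^*_i)^2 - \sum_{i=1}^n (z^*_{n+i})^2 .
\]
Using $z^*_1=1$, the left side is $\alpha_{1,1}z^*_{n+1}+\alpha_{1,n}z^*_{2n}$. The right side equals $-1+\sum_{i=1}^n (z^*_i)^2-\sum_{i=1}^n(z^*_{n+i})^2$, which is exactly \eqref{eq:difference_of_squared_sum}.

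The only delicate step is the index bookkeeping for the cycle's wrap-around edge $(1,n)$. One has to check that it is counted in \eqref{eq:linear_system_for_feasibility_n_2n} but not in any row equation with $i\ge2$. One also has to check that the edges $(i,i-1)$ for $i\ge 2$ match column $i-1$ with $i-1\le n-1$. Once the incidence of each edge with its row and column equation is tabulated, the cancellation is immediate. Notably, $\beta$ and \eqref{eq:linear_system_for_feasibility_n_1} are not needed at all.
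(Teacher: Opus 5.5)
Your proof is correct and is essentially the paper's argument. The paper uses exactly the same combination: the column equations weighted by $z^*_{n+j}$ minus the row equations $i\ge 2$ weighted by $z^*_i$. It pairs them edge by edge into the identity \eqref{eq:sum_of_second_and_forth}, telescopes from $\alpha_{1,1}$ to $\alpha_{n,n}$, and closes the cycle with \eqref{eq:linear_system_for_feasibility_n_2n}, whereas you sum everything at once and cancel; you are also right that neither $\beta$ nor \eqref{eq:linear_system_for_feasibility_n_1} is needed.
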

\begin{proof}
    Replacing $i$ by $i-1$ in \eqref{eq:linear_system_for_feasibility_n_npi} and multiplying by $z^*_{n+i-1}$ gives
    \[
        z^*_{n+i-1} \left(\alpha_{i-1,i-1} z^*_{i-1} + \alpha_{i,i-1} z^*_{i} \right) = -\left(z^*_{n+i-1}\right)^2
        \quad \text{for $i = 2,\ldots,n$.}
    \]
   Multiplying \eqref{eq:linear_system_for_feasibility_n_i} by $-z^*_i$   for each $i \in \{2,\ldots,n\}$ and adding the result yields
    \begin{equation} \label{eq:sum_of_second_and_forth}
        \alpha_{i-1,i-1} z^*_{i-1} z^*_{n+i-1} - \alpha_{i,i} z^*_i z^*_{n+i}
        = -\left(z^*_{n+i-1}\right)^2 + \left(z^*_i\right)^2.       
    \end{equation}
    It follows that
    \begin{align*}  
        \alpha_{1,1} z^*_{1} z^*_{n+1}
        &= \left(z^*_2\right)^2 - \left(z^*_{n+1}\right)^2 + \alpha_{2,2} z^*_2 z^*_{n+2} \\
        &= -\sum_{i=1}^{n-1} \left(z^*_{n+i}\right)^2 + \sum_{i=2}^{n} \left(z^*_i\right)^2 + \alpha_{n,n} z^*_n z^*_{2n}
            & &\quad  \text{(by repeatedly applying \eqref{eq:sum_of_second_and_forth})} \\
        &= -\sum_{i=1}^{n} \left(z^*_{n+i}\right)^2 + \sum_{i=2}^{n} \left(z^*_i\right)^2 - \alpha_{1,n} z^*_1 z^*_{2n} 
            & &\quad \text{(by \eqref{eq:linear_system_for_feasibility_n_2n})}
    \end{align*}   
    Adding $1 = \left(z^*_1\right)^2$ to the both side yields the desired identity. 
\end{proof}

The following theorem provides our first sufficient condition for the tightness of SDP and DNN relaxations.

\begin{theorem} \label{thm:upperbound_delta_cycle}
    Let $\z^*$ be an optimal solution of \eqref{eq:precise_qcqpform_dnn_completion} for an $n \times n$ DNN completion problem, 
        $\KC$ be either $\SymMat_+^{2n}$ or $\DNN^{2n}$,
        and $\Omega$ be the index set of its specified elements.
    Suppose that $\Omega = \Omega_{\text{cycle}}$.
    If $z^*_{n+j} / z^*_i \in \left[1, \sqrt{1 + \frac{1}{n-1}}\right]$ for all $(i, j) \in \Omega$,
        then the relaxation~\eqref{eq:dnn_relaxation} admits $\z^*\trans{\left(\z^*\right)}$ as a rank-one optimal solution for each $\theta \in \{0, 1\}$.
    Consequently, the relaxations of \eqref{eq:precise_qcqpform_dnn_completion} and \eqref{eq:qcqpform_dnn_completion} are both tight.
\end{theorem}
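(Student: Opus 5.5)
The plan is to produce an explicit feasible point $(\boldsymbol{\alpha},\beta,0)$ of \eqref{eq:linear_system_for_feasibility_n} with $\boldsymbol{\alpha}\le\0$. The rest then follows from earlier results.
\begin{itemize}
\item Lemma~\ref{lem:nonpositive_certificate} (with $m=n$) gives $S(\boldsymbol{\alpha},\beta,0)\succeq O$.
\item Since $\SymMat_+^{2n}\subseteq\SPN^{2n}=(\DNN^{2n})^*$, the slack matrix lies in $\KC^*$ for both choices of $\KC$.
\item Since $\boldsymbol{\alpha}\le\0\le\rho$, Proposition~\ref{prop:tightness_on_system}(a) and (b) give the claim for $\theta=0$ and $\theta=1$.
\end{itemize}
So the whole proof reduces to a sign-feasibility question for the linear system.

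To solve the system, I will pass to the products $p_{i,j}\coloneqq\alpha_{i,j}z^*_iz^*_{n+j}$ and write $a_j\coloneqq(z^*_{n+j})^2$, $b_i\coloneqq(z^*_i)^2$ (so $b_1=1$). Multiplying \eqref{eq:linear_system_for_feasibility_n_i} by $z^*_i$ and \eqref{eq:linear_system_for_feasibility_n_npi}, \eqref{eq:linear_system_for_feasibility_n_2n} by $z^*_{n+i}$ turns the system into
\[
p_{i,i-1}+p_{i,i}=-b_i \ (i\ge2), \qquad p_{j,j}+p_{j+1,j}=-a_j \ (j\le n-1), \qquad p_{n,n}+p_{1,n}=-a_n.
\]
Equation~\eqref{eq:linear_system_for_feasibility_n_1} then only determines $\beta$. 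There is one free parameter $s\coloneqq p_{1,1}$, and propagating around the cycle gives
\[
p_{k,k}=s+A_{k-1}-B_k, \qquad p_{k+1,k}=-s-A_k+B_k \ (k\le n-1), \qquad p_{1,n}=-s-A_n+B_n,
\]
where $A_k\coloneqq\sum_{j=1}^k a_j$ and $B_k\coloneqq\sum_{i=2}^k b_i$. Consistency of the last equation is exactly Lemma~\ref{lem:the_others_except_beta}. Since $z^*>0$, the condition $\boldsymbol{\alpha}\le\0$ is equivalent to $p\le 0$, that is,
\[
\max_{1\le k\le n}\,(B_k-A_k)\;\le\; s\;\le\;\min_{1\le l\le n}\,(B_l-A_{l-1}).
\]
It therefore suffices to show $(B_l-A_{l-1})-(B_k-A_k)\ge0$ for all $k,l$.

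The comparison splits into two cases.
\begin{itemize}
\item If $k\ge l$, the difference equals $a_k+\sum_{i=l+1}^{k}(a_{i-1}-b_i)$. Each term is nonnegative by the lower ratio bound $z^*_{n+i-1}/z^*_i\ge1$ on the edge $(i,i-1)$. This case uses only the lower bound.
\item If $k<l$, the difference equals $b_{k+1}+\sum_{i=k+2}^{l}(b_i-a_{i-1})$. Here the upper bound enters: from $z^*_{n+i-1}/z^*_i\le c\coloneqq\sqrt{1+1/(n-1)}$ we get $b_i-a_{i-1}\ge-(c^2-1)b_i=-b_i/(n-1)$.
\end{itemize}
The sum in the second case has at most $n-1$ terms, so the difference is at least $b_{k+1}-\frac{1}{n-1}\sum_{i=k+2}^{l}b_i$.

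The hard step is to bound this last quantity below by $0$. It needs each $b_i$ with $i>k+1$ to be controlled by $b_{k+1}$. I would first try chaining the two ratio bounds along the path $k+1\to k+1\to k+2\to\cdots$, using $b_{i+1}\le a_i$ and $a_i\le c^2b_i$. If that chaining loses too much, I would instead choose $s$ at an explicit endpoint, namely $s=\max_k(B_k-A_k)$. I would then redo the estimate by rewriting the relevant quantities as telescoping sums of differences of consecutive squares, as in the proof of Lemma~\ref{lem:the_others_except_beta}. Each such difference lies in $[0,(c^2-1)\cdot(\text{local square})]$, and the factor $1/(n-1)$ is designed to absorb exactly $n-1$ such terms. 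The case $n=2$, where the condition reduces to $z^*_{2+j}/z^*_i\in[1,\sqrt2]$, serves as a sanity check against the $2\times2$ proposition.
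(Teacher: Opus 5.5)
\textbf{The reduction is right, but the one inequality that carries the theorem is never proved.}

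Your setup is correct and coincides with the paper's. In your parametrization, the paper's certificate is exactly your fallback endpoint $s=p_{1,1}=-(z^*_{n+1})^2=\max_k(B_k-A_k)$. This holds because $B_k-A_k=-a_1-\Delta_k$ with $\Delta_k=\sum_{t=2}^k(a_t-b_t)\ge 0$. Your $k<l$ difference telescopes to $b_l-\sum_{i=k+1}^{l-1}(a_i-b_i)$, so every case is implied by $k=1$, i.e.\ by
\[
\Delta_{l-1}\le b_l \qquad (l=2,\dots,n).
\]
That inequality is the whole content of the theorem, and your proposal leaves it open.

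\textbf{Your first route cannot work.} The quantity $b_{k+1}-\frac{1}{n-1}\sum_{i=k+2}^{l}b_i$ can be negative under the hypotheses. For $n=4$ take $z^*=(1,\tfrac{\sqrt3}{2},1,\tfrac{2}{\sqrt3};\,1,1,\tfrac{2}{\sqrt3},\tfrac{2}{\sqrt3})$. All eight ratios on $\Omega_{\text{cycle}}$ lie in $[1,2/\sqrt3]$. Yet $(k,l)=(1,4)$ gives $\tfrac34-\tfrac13(1+\tfrac43)=-\tfrac1{36}$, while the true difference is $\tfrac34$. The estimate $b_i-a_{i-1}\ge -b_i/(n-1)$ throws away too much.

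\textbf{The fallback, read naturally, hits the same wall.} Bounding each $a_t-b_t$ by $b_t/(n-1)$ and summing gives $\Delta_{l-1}\le\frac{1}{n-1}\sum_{t=2}^{l-1}b_t$. But earlier $b_t$ may exceed $b_l$. For $n=4$, $z^*=(1,\tfrac{2}{\sqrt3},1,\tfrac{\sqrt3}{2};\,\tfrac{2}{\sqrt3},\tfrac{2}{\sqrt3},1,1)$ satisfies the hypotheses. For $l=4$ the bound gives $\tfrac79$, which exceeds $b_4=\tfrac34$, although here $\Delta_3=0$.

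\textbf{The missing idea is the paper's strengthened induction.} The paper proves $\Delta_k\le\frac{k-1}{n-1}\,b_{k+1}$ for $k=1,\dots,n-1$. The bound is re-anchored at the next vertex at every step, instead of comparing everything with one fixed $b$. In the step $\Delta_{k+1}=\Delta_k+a_{k+1}-b_{k+1}$ there are two cases:
\begin{itemize}
\item If $b_{k+1}\le b_{k+2}$, use $a_{k+1}\le c^2 b_{k+1}$ (edge $(k+1,k+1)$). This gives $\Delta_{k+1}\le\frac{k}{n-1}b_{k+1}\le\frac{k}{n-1}b_{k+2}$.
\item Otherwise, use $a_{k+1}\le c^2 b_{k+2}$ (edge $(k+2,k+1)$). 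Since $1-\frac{k-1}{n-1}\ge 0$ and $b_{k+1}>b_{k+2}$, replace $-(1-\frac{k-1}{n-1})b_{k+1}$ by $-(1-\frac{k-1}{n-1})b_{k+2}$. This again yields $\frac{k}{n-1}b_{k+2}$.
\end{itemize}
The coefficient grows by exactly $c^2-1=\frac{1}{n-1}$ per step, with no geometric factor; this is where the threshold $\sqrt{1+1/(n-1)}$ comes from. Since $\frac{k-1}{n-1}\le 1$, you get $\Delta_{l-1}\le b_l$ for every $l$. The rest of your argument then goes through: the sign conditions, Lemma~\ref{lem:nonpositive_certificate}, and Proposition~\ref{prop:tightness_on_system}.
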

\begin{proof}
    By assumption, for all $t \in \{2,\ldots,n-1\}$, we have   
    \[
        \left(z^*_t\right)^2 \leq \left(z^*_{n+t}\right)^2 \leq \left(1+\frac{1}{n-1}\right) \left(z^*_t\right)^2,
        \quad \left(z^*_{t+1}\right)^2 \leq \left(z^*_{n+t}\right)^2 \leq \left(1+\frac{1}{n-1}\right)\left(z^*_{t+1}\right)^2.
    \]
    \[
        \Delta_1 \coloneqq 0, \qquad
        \Delta_k \coloneqq \sum_{t=2}^k \left[\left(z^*_{n+t}\right)^2-\left(z^*_t\right)^2\right] \geq 0 \quad (k=2,\ldots,n).
    \]
    We show that
    \begin{equation} \label{eq:upperbound_delta_cycle_induction}
        \Delta_k \leq \frac{k-1}{n-1}(z^*_{k+1})^2
    \end{equation}
    holds for all $k \in \{1,\ldots,n-1\}$ by induction.
    The case $k = 1$ is immediate from $\Delta_1 = 0$.
    Assume \eqref{eq:upperbound_delta_cycle_induction} holds for some $k \in \{1,\ldots,n-2\}$.
    It follows from the definition of $\Delta_{k+1}$ that $\Delta_{k+1} = \Delta_k + \left(z^*_{n+k+1}\right)^2 - \left(z^*_{k+1}\right)^2$.  
    If $\left(z^*_{k+1}\right)^2 \leq \left(z^*_{k+2}\right)^2$, then we have
    \[
        \Delta_{k+1}
        \leq \frac{k-1}{n-1}\left(z^*_{k+1}\right)^2 + \left(1+\frac{1}{n-1}\right)\left(z^*_{k+1}\right)^2 - \left(z^*_{k+1}\right)^2
        =    \frac{k}{n-1}\left(z^*_{k+1}\right)^2
        \leq \frac{k}{n-1}\left(z^*_{k+2}\right)^2.
    \]
    Otherwise, we have
    \begin{align*}
        \Delta_{k+1}
        &\leq \frac{k-1}{n-1}\left(z^*_{k+1}\right)^2 + \left(1+\frac{1}{n-1}\right)\left(z^*_{k+2}\right)^2 - \left(z^*_{k+1}\right)^2 \\
        &\leq \left(1+\frac{1}{n-1}\right)\left(z^*_{k+2}\right)^2 - \left(1 - \frac{k-1}{n-1}\right)\left(z^*_{k+2}\right)^2
            = \frac{k}{n-1}\left(z^*_{k+2}\right)^2.
    \end{align*}
    This proves the induction step.
        Define $\boldsymbol{\alpha}$ by   
    \begin{alignat*}{2}
        \alpha_{1,1}   &\coloneqq -\frac{z_{n+1}^*}{z_1^*}, \qquad&
        \alpha_{1,n}   &\coloneqq -\frac{\Delta_n}{z_1^* z_{2n}^*}, \\
        \alpha_{i,i-1} &\coloneqq -\frac{\Delta_{i-1}}{z_i^* z_{n+i-1}^*}, \qquad&
        \alpha_{i,i}   &\coloneqq -\frac{(z_i^*)^2 - \Delta_{i-1}}{z_i^* z_{n+i}^*} \quad (i=2,\ldots,n),
    \end{alignat*}
    and $\beta \coloneqq - \sum_{i = 1}^n \left(z^*_i\right)^2 + \sum_{j = 1}^n \left(z^*_{n+j}\right)^2$.   
    Then, $\alpha_{1,1} \leq 0$ holds by assumption,
        while the inequalities $\alpha_{1,n} \leq 0$ and $\alpha_{i,i-1} \leq 0 \, (i \in \{2,\ldots,n\})$
        also follow from the nonnegativity of $\Delta_k$.
    By \eqref{eq:upperbound_delta_cycle_induction} with $k=i-1$, we have
     $\Delta_{i-1} \le \frac{i-2}{n-1}(z_i^*)^2 \le (z_i^*)^2$,
     and hence $\alpha_{i,i}\le 0$.  
    Hence, by Lemma~\ref{lem:nonpositive_certificate}, it follows that $S(\boldsymbol{\alpha},\beta,0) \succeq O$.   
    We next verify that $(\boldsymbol{\alpha}, \beta)$ satisfies \eqref{eq:linear_system_for_feasibility_n}. 
    By substituting $\alpha_{1,1}$, $\alpha_{1,n}$, and $\beta$ into the left-hand side of \eqref{eq:linear_system_for_feasibility_n_1},
    we obtain
    \[
        \alpha_{1,1} z^*_{n+1} + \alpha_{1,n} z^*_{2n} + \beta
        = -(z^*_{n+1})^2 - \Delta_n
          - \sum_{i=1}^n (z_i^*)^2 + \sum_{j=1}^n (z_{n+j}^*)^2.
    \]
    Since
    \[
        \Delta_n = \sum_{t=2}^n \bigl((z_{n+t}^*)^2-(z_t^*)^2\bigr)
    \]
    and $z_1^*=1$, it follows that
    \[
        \alpha_{1,1} z^*_{n+1} + \alpha_{1,n} z^*_{2n} + \beta = -(z_1^*)^2 = -1.
    \]
    For $i \in \{2,\dots,n\}$, we compute
    \[
        \alpha_{i,i-1}z^*_{n+i-1} + \alpha_{i,i}z^*_{n+i}
        = -\frac{\Delta_{i-1}}{z_i^*} - \frac{(z_i^*)^2 - \Delta_{i-1}}{z_i^*}
        = -z^*_i,
    \]
    which implies that \eqref{eq:linear_system_for_feasibility_n_i} is also satisfied.
    Similarly, \eqref{eq:linear_system_for_feasibility_n_npi} holds for $i \in \{2,\dots,n-1\}$ since
    \[
        \alpha_{i,i} z^*_i + \alpha_{i+1,i} z^*_{i+1}
        = -\frac{(z_i^*)^2 - \Delta_{i-1}}{z_{n+i}^*} - \frac{\Delta_i}{z_{n+i}^*}
        = -z^*_{n+i}.
    \]
    From $\alpha_{1,1} z^*_1 + \alpha_{2,1} z^*_{2} = -z^*_{n+1}$,
        \eqref{eq:linear_system_for_feasibility_n_npi} also holds for $i = 1$.
    The last equality \eqref{eq:linear_system_for_feasibility_n_2n} follows from $\alpha_{n,n} z^*_{n} + \alpha_{1,n} z^*_1 = -z^*_{2n}$.
    Therefore, the property~(a) of Proposition~\ref{prop:tightness_on_system} holds.
    In addition, since $\boldsymbol{\alpha} \leq \0$, the property~(b) also holds.
\end{proof}

Theorem~\ref{thm:upperbound_delta_cycle} provides an explicit sufficient condition in terms of
    local ratio bounds along the cycle.
As the admissible range becomes increasingly restrictive for larger $n$,
we next develop  a more flexible sufficient condition.

\subsection{Relaxing the upper bound of \texorpdfstring{$\delta$}{delta} by adding edges}

The local ratio condition in Theorem~\ref{thm:upperbound_delta_cycle} can be written as
    $z^*_{n+j}/z^*_i \in \left[1, 1 + \delta\right]$ for all $(i,j) \in \Omega_\text{cycle}$,
    where $\delta$ is bounded above by $\sqrt{1+\frac{1}{n-1}}-1$.
The upper bound of $z^*_{n+j}/z^*_i$ tends to one as $n$ increases.
Hence, Theorem~\ref{thm:upperbound_delta_cycle} becomes restrictive in large completion problems.
A natural way to enlarge the feasible set of the system is to consider a denser sparsity pattern.  
By adding edges
\[
    \Omega_{\text{add}} \coloneqq \left\{ (1, 2), \ldots, (1, n-1)\right\}
\]
to $\Omega$, the upper-bound $1 + \delta$ in the local ratio condition can be removed.
We therefore consider $\Omega_{\text{cycle}} \cup \Omega_{\text{add}}$, which adds $n-2$ edges. 
This augmented graph clarifies how additional connectivity simplifies
    the construction of nonpositive dual variables.

With $\lambda=0$, the linear system for \eqref{eq:dnn_relaxation} with $\Omega = \Omega_{\text{cycle}} \cup \Omega_{\text{add}}$ becomes   
\begin{subequations} \label{eq:linear_system_for_feasibility_n_dense}
    \begin{empheq}[left = {\empheqlbrace \;}, right = {}]{alignat=4}
        &\sum_{j=1}^n \alpha_{1,j} z^*_{n+j} & &+ \beta  & &= -1, \label{eq:linear_system_for_feasibility_n_dense_1} \\
        &\alpha_{i,i-1} z^*_{n+i-1} & &+ \alpha_{i,i}   z^*_{n+i}                      & &= -z^*_i,     & \quad & i = 2,\ldots,n, \label{eq:linear_system_for_feasibility_n_dense_i} \\
        &\alpha_{1,1}   z^*_{1}     & &+ \alpha_{2,1}   z^*_{2}                        & &= -z^*_{n+1}, \label{eq:linear_system_for_feasibility_n_dense_np1} \\
        &\alpha_{i,i}   z^*_{i}     & &+ \alpha_{i+1,i} z^*_{i+1} + \alpha_{1,i} z^*_1 & &= -z^*_{n+i}, & \quad & i = 2,\ldots,n-1, \label{eq:linear_system_for_feasibility_n_dense_npi} \\
        &\alpha_{n,n}   z^*_{n}     & &+ \alpha_{1,n}   z^*_1                          & &= -z^*_{2n} \label{eq:linear_system_for_feasibility_n_dense_2n}.
    \end{empheq}
\end{subequations}   
    Figure~\ref{fig:edges_corresponding_to_constraints_dense} illustrates the correspondence 
    between the equations in \eqref{eq:linear_system_for_feasibility_n_dense} and the relevant subgraphs. 
    To analyze feasibility, we extend Lemma~\ref{lem:the_others_except_beta} to $\Omega_{\text{cycle}}\cup\Omega_{\text{add}}$.
\begin{figure}
    \centering
    \begin{minipage}[b]{0.23\textwidth}
        \centering
        \begin{tikzpicture}[yscale=-1]
            \draw[opacity=0] (-1.0,-1.0) -- (2.4,2.0) ;
            \draw[thick] (0.0,0.00) -- node[midway,above=-1mm]{\scriptsize $\alpha_{1,1}$} (1.5,0.00) ;
            \draw[thick] (0.0,0.00) -- node[pos=0.75,above=-0.5mm]{\scriptsize $\alpha_{1,2}$} (1.5,0.75) ;
            \draw[thick] (0.0,0.00) -- node[pos=0.3,below=2mm]{\scriptsize $\alpha_{1,n}$} (1.5,1.75) ;
            
            \node at (0.0,-0.5) {$\x$};
            \fill[black] (0,0.00) circle (0.075) node[left]{$1$};
            \node at (1.5,-0.5) {$\y$};
            \fill[black] (1.5,0.00) circle (0.075) node[right]{$1$};
            \fill[black] (1.5,0.75) circle (0.075) node[right]{$2$};
            \node at (1.5,1.15) {$\vdots$};
            \fill[black] (1.5,1.75) circle (0.075) node[right]{$n$};
            
            \draw (2.0,-0.2) -- (2.2,-0.2) -- node[midway,sloped,below] {$n$ edges} (2.2,1.95) -- (2.0,1.95) ;
        \end{tikzpicture}
        \vspace{-3.5ex}
        \subcaption{$i = 1$}
    \end{minipage}
    \begin{minipage}[b]{0.26\textwidth}
        \centering
        \begin{tikzpicture}[yscale=-1]
            \draw[opacity=0] (-1.25,0.0) -- (2.7,1.0) ;
            \draw[thick] (0.0,0.75) -- node[midway,above=0.5mm]{\scriptsize $\alpha_{i,i-1}$} (1.5,0.00) ;
            \draw[thick] (0.0,0.75) -- node[midway,below=-0.25mm]{\scriptsize $\alpha_{i,i}$} (1.5,0.75) ;
            \fill[black] (0,0.75) circle (0.075) node[left]{$i$};
            \fill[black] (1.5,0.00) circle (0.075) node[right]{$i-1$};
            \fill[black] (1.5,0.75) circle (0.075) node[right]{$i$};
        \end{tikzpicture}
        \vspace{-1ex}
        \subcaption{$i \in \{2,\ldots,n\}$}
        \vspace{1ex}
        
        \begin{tikzpicture}[yscale=-1]
            \draw[opacity=0] (-1.25,-0.5) -- (2.7,1.0) ;
            \draw[thick] (0.0,0.00) -- node[midway,above=-1mm]{\scriptsize $\alpha_{1,1}$} (1.5,0.00) ;
            \draw[thick] (0.0,0.75) -- node[midway,below=1mm]{\scriptsize $\alpha_{2,1}$} (1.5,0.00) ;
            \fill[black] (0,0.00) circle (0.075) node[left]{$1$};
            \fill[black] (0,0.75) circle (0.075) node[left]{$2$};
            \fill[black] (1.5,0.00) circle (0.075) node[right]{$1$};
        \end{tikzpicture}
        \vspace{-0.5ex}
        \subcaption{$i = n+1$}
    \end{minipage}
    \begin{minipage}[b]{0.26\textwidth}
        \centering
        \begin{tikzpicture}[yscale=-1]
            \draw[thick] (0.0,0.00) -- node[midway,above=0mm]{\scriptsize $\alpha_{1,i}$} (1.5,1.00) ;
            \draw[thick] (0.0,1.00) -- node[midway,above=-1mm]{\scriptsize $\alpha_{i,i}$} (1.5,1.00) ;
            \draw[thick] (0.0,1.75) -- node[midway,below=1mm]{\scriptsize $\alpha_{i+1,i}$} (1.5,1.00) ;
            
            \node at (0.0,-0.5) {$\x$};
            \fill[black] (0,0.00) circle (0.075) node[left]{$1$};
            \fill[black] (0,1.00) circle (0.075) node[left]{$i$};
            \fill[black] (0,1.75) circle (0.075) node[left]{$i+1$};
            \node at (1.5,-0.5) {$\y$};
            \fill[black] (1.5,1.00) circle (0.075) node[right]{$i$};
        \end{tikzpicture}
        \vspace{-0.8ex}
        \subcaption{$i \in \{n+2,\ldots,2n-1\}$}
    \end{minipage}
    \begin{minipage}[b]{0.20\textwidth}
        \centering
        \begin{tikzpicture}[yscale=-1]
            \draw[thick] (0.0,0.00) -- node[midway,above=1.5mm]{\scriptsize $\alpha_{1,n}$} (1.5,1.75) ;
            \draw[thick] (1.5,1.75) -- node[midway,above=-1mm]{\scriptsize $\alpha_{n,n}$} (0.0,1.75) ;
            
            \node at (0.0,-0.5) {$\x$};
            \fill[black] (0,0.00) circle (0.075) node[left]{$1$};
            \fill[black] (0,1.75) circle (0.075) node[left]{$n$};
            \node at (1.5,-0.5) {$\y$};
            \fill[black] (1.5,1.75) circle (0.075) node[right]{$n$};
        \end{tikzpicture}
        \vspace{-0.6ex}
        \subcaption{$i = 2n$}
    \end{minipage}
    \caption{Graph representations of \eqref{eq:linear_system_for_feasibility_n_dense} associated with the right-hand sides $-z_i^*$} 
    \label{fig:edges_corresponding_to_constraints_dense}
\end{figure}

\begin{lemma} \label{lem:the_others_except_beta_dense}   
    Let $(\boldsymbol{\alpha},\beta)$ be a feasible point of \eqref{eq:linear_system_for_feasibility_n_dense}.
    Then, the variables $\alpha_{1,1},\ldots,\alpha_{1,n}$ satisfy
    \begin{equation} \label{eq:difference_of_squared_sum_dense}
        - 1 + \sum_{i=1}^n \left(z^*_i\right)^2 - \sum_{i=1}^n \left(z^*_{n+i}\right)^2 = \sum_{j=1}^n \alpha_{1,j} z^*_{n+j}.
    \end{equation}  
\end{lemma}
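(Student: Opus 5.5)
The plan is a weighted telescoping sum, as in Lemma~\ref{lem:the_others_except_beta}, but organized more symmetrically. Multiply each of the equations \eqref{eq:linear_system_for_feasibility_n_dense_i} (for $i=2,\ldots,n$) by $z^*_i$. Multiply each of the equations \eqref{eq:linear_system_for_feasibility_n_dense_np1}, \eqref{eq:linear_system_for_feasibility_n_dense_npi}, \eqref{eq:linear_system_for_feasibility_n_dense_2n} (the $y$-equations, indexed by $j=1,\ldots,n$) by $z^*_{n+j}$. Then compare the two resulting sums. Every term $\alpha_{i,j}z^*_iz^*_{n+j}$ with $(i,j)\in\Omega$ and $i\ge 2$ appears exactly once in each sum: once from the row equation for $x_i$ and once from the column equation for $y_j$. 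Subtracting the sums therefore cancels all such terms, and only the terms attached to row $1$ survive.

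Concretely, the row-equation sum gives
\[
\sum_{i=2}^n\sum_{(i,j)\in\Omega}\alpha_{i,j}z^*_iz^*_{n+j}=-\sum_{i=2}^n(z^*_i)^2 .
\]
The column-equation sum gives
\[
\sum_{(i,j)\in\Omega}\alpha_{i,j}z^*_iz^*_{n+j}=-\sum_{j=1}^n(z^*_{n+j})^2 ,
\]
where the left-hand side now runs over all of $\Omega$, including the edges $(1,j)$ for $j=1,\ldots,n$, since $\Omega_{\text{cycle}}\cup\Omega_{\text{add}}$ contains every $(1,j)$. Subtracting the first identity from the second leaves
\[
\sum_{j=1}^n\alpha_{1,j}z^*_1z^*_{n+j}=\sum_{i=2}^n(z^*_i)^2-\sum_{j=1}^n(z^*_{n+j})^2 .
\]
Using $z^*_1=1$, we add and subtract $(z^*_1)^2=1$ on the right. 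This gives exactly \eqref{eq:difference_of_squared_sum_dense}, so equation \eqref{eq:linear_system_for_feasibility_n_dense_1} (the one containing $\beta$) is never needed.

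The only delicate step is the bookkeeping: each edge of $\Omega_{\text{cycle}}\cup\Omega_{\text{add}}$ must appear exactly once among the column equations and, when $i\ge2$, exactly once among the row equations. Reading off \eqref{eq:linear_system_for_feasibility_n_dense} together with Figure~\ref{fig:edges_corresponding_to_constraints_dense} confirms this. Column $1$ carries the edges $(1,1),(2,1)$. Column $i\in\{2,\ldots,n-1\}$ carries $(i,i),(i+1,i),(1,i)$. Column $n$ carries $(n,n),(1,n)$. Row $i\ge2$ carries $(i,i-1),(i,i)$. Together these cover each edge exactly once.
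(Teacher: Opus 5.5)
Your proof is correct and uses the same linear combination as the paper: each row equation \eqref{eq:linear_system_for_feasibility_n_dense_i} is weighted by $z^*_i$, each column equation is weighted by $z^*_{n+j}$, and the $\beta$-equation is never used. The paper builds this combination up sequentially, telescoping along the cycle via the pairwise identity \eqref{eq:sum_of_second_and_forth_dense}, whereas your global double-counting of edges is tidier and shows the identity holds for any sparsity pattern $\Omega$ (with the sum taken over $(1,j)\in\Omega$), so it also gives Lemma~\ref{lem:the_others_except_beta} as a special case.
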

\begin{proof}
    Replacing $i$ by $i-1$ in
    \eqref{eq:linear_system_for_feasibility_n_dense_npi} and multiplying by $z^*_{n+i-1}$ gives 
    \[
        z^*_{n+i-1} \left(\alpha_{i-1,i-1} z^*_{i-1} + \alpha_{i,i-1} z^*_{i} + \alpha_{1,i-1} z^*_1\right) = -\left(z^*_{n+i-1}\right)^2
        \quad \text{for $i = 3,\ldots,n$.}
    \]
    By adding \eqref{eq:linear_system_for_feasibility_n_dense_i} multiplied by $(-z^*_i)$,
        for each $i \in \{3,\ldots,n\}$, we have   
    \begin{equation} \label{eq:sum_of_second_and_forth_dense}
        \alpha_{i-1,i-1} z^*_{i-1} z^*_{n+i-1} - \alpha_{i,i} z^*_i z^*_{n+i} + \alpha_{1,i-1} z^*_1 z^*_{n+i-1}
        = -\left(z^*_{n+i-1}\right)^2 + \left(z^*_i\right)^2.
    \end{equation}
    Then, from \eqref{eq:linear_system_for_feasibility_n_dense_i} and \eqref{eq:linear_system_for_feasibility_n_dense_np1}, it follows that
    \begin{align*}
        \alpha_{1,1} z^*_{1} z^*_{n+1}
        &= -\left(z^*_{n+1}\right)^2 - \alpha_{2,1} z^*_2 z^*_{n+1} \\
        &= -\left(z^*_{n+1}\right)^2 + \left(z^*_2\right)^2 + \alpha_{2,2} z^*_2 z^*_{n+2} \\
        &= -\sum_{i=1}^{n-1} \left(z^*_{n+i}\right)^2 + \sum_{i=2}^{n} \left(z^*_i\right)^2
            + \alpha_{n,n} z^*_n z^*_{2n} - \sum_{i=3}^{n} \alpha_{1,i-1} z^*_1 z^*_{n+i-1},
    \end{align*}
    where the last equality follows by recursively applying \eqref{eq:sum_of_second_and_forth_dense}.
    Using \eqref{eq:linear_system_for_feasibility_n_dense_2n}, we obtain 
    \[
        \alpha_{1,1} z^*_{1} z^*_{n+1} =
            -\sum_{i=1}^{n} \left(z^*_{n+i}\right)^2 + \sum_{i=2}^{n} \left(z^*_i\right)^2
            - \sum_{i=2}^{n} \alpha_{1,i} z^*_1 z^*_{n+i}.
    \]  
    Using $z_1^*=1$, we obtain the desired identity.  
\end{proof}

Using Lemma~\ref{lem:the_others_except_beta_dense}, we obtain the following 
    relaxed sufficient condition for tightness of the SDP and DNN relaxations with 
    $\Omega = \Omega_{\text{cycle}}\cup\Omega_{\text{add}}$.
\begin{theorem} \label{thm:no_upperbound_delta_dense}
    Let $\z^*$ be an optimal solution of \eqref{eq:precise_qcqpform_dnn_completion} for an $n \times n$ DNN completion problem, 
        $\KC$ be either $\SymMat_+^{2n}$ or $\DNN^{2n}$,
        and $\Omega$ be the index set of its specified elements.
    Suppose that $\Omega = \Omega_{\text{cycle}} \cup \Omega_{\text{add}}$.
    If $z^*_{n+j} / z^*_i \geq 1$ for all $(i, j) \in \Omega$,
        then the relaxation~\eqref{eq:dnn_relaxation} admits $\z^*\trans{\left(\z^*\right)}$
        as a rank-one optimal solution for each $\theta \in \{0, 1\}$.
    Consequently, the relaxations of \eqref{eq:precise_qcqpform_dnn_completion} and \eqref{eq:qcqpform_dnn_completion} are both tight.
\end{theorem}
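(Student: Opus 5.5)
The plan is to follow the template of the proof of Theorem~\ref{thm:upperbound_delta_cycle}: write down an explicit $(\boldsymbol{\alpha},\beta)$ solving \eqref{eq:linear_system_for_feasibility_n_dense} with $\lambda=0$ and $\boldsymbol{\alpha}\le\0$. Lemma~\ref{lem:nonpositive_certificate} then gives $S(\boldsymbol{\alpha},\beta,0)\succeq O$; since $S$ has nonpositive off-diagonal entries, this also places it in $\SPN^{2n}=(\DNN^{2n})^*$. Finally, Proposition~\ref{prop:tightness_on_system} gives both (a) and (b), where (b) holds because $\boldsymbol{\alpha}\le\0\le\rho$.

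The added edges $\Omega_{\text{add}}$ remove the need for the cumulative quantities $\Delta_k$. The idea is to route each local excess $(z^*_{n+i})^2-(z^*_i)^2$ directly through the edge $(1,i)$, instead of propagating it along the cycle. Concretely, I set $\alpha_{i,i-1}\coloneqq 0$ for $i=2,\ldots,n$. Then \eqref{eq:linear_system_for_feasibility_n_dense_i} forces
\[
\alpha_{i,i}=-\frac{z^*_i}{z^*_{n+i}}\le 0 \qquad (i=2,\ldots,n).
\]
Equation \eqref{eq:linear_system_for_feasibility_n_dense_np1} forces $\alpha_{1,1}=-z^*_{n+1}/z^*_1=-z^*_{n+1}\le 0$. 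For $i=2,\ldots,n-1$, equation \eqref{eq:linear_system_for_feasibility_n_dense_npi} with $z^*_1=1$ gives
\[
\alpha_{1,i}=-z^*_{n+i}+\frac{(z^*_i)^2}{z^*_{n+i}}=\frac{(z^*_i)^2-(z^*_{n+i})^2}{z^*_{n+i}}.
\]
Equation \eqref{eq:linear_system_for_feasibility_n_dense_2n} gives the same formula for $\alpha_{1,n}$. Since $(i,i)\in\Omega$, the hypothesis $z^*_{n+i}/z^*_i\ge1$ makes all of these nonpositive. Finally, I define $\beta\coloneqq-1-\sum_{j=1}^n\alpha_{1,j}z^*_{n+j}$, so that \eqref{eq:linear_system_for_feasibility_n_dense_1} holds. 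As a consistency check, this $\beta$ equals $-\sum_i(z^*_i)^2+\sum_j(z^*_{n+j})^2$, in agreement with Lemma~\ref{lem:the_others_except_beta_dense}.

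The remaining steps are to verify each equation of \eqref{eq:linear_system_for_feasibility_n_dense}, which holds by construction, and to confirm that $\z^*>\0$ under Assumption~\ref{asm:connected}, so that all divisions are legitimate. I expect no real obstacle. The only point that needs care is ensuring that no equation is left unconstrained or overdetermined by setting $\alpha_{i,i-1}=0$. Each equation involves exactly one newly determined variable, in the order: $\alpha_{i,i}$ from \eqref{eq:linear_system_for_feasibility_n_dense_i}, then $\alpha_{1,1}$, then $\alpha_{1,i}$ and $\alpha_{1,n}$, then $\beta$. Under the stated hypothesis, the only ratios actually used are those on the edges $(i,i)$.
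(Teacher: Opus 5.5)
Your construction is exactly the paper's ($\alpha_{i,i-1}=0$, $\alpha_{i,i}=-z^*_i/z^*_{n+i}$, $\alpha_{1,1}=-z^*_{n+1}$, $\alpha_{1,i}=((z^*_i)^2-(z^*_{n+i})^2)/z^*_{n+i}$, and the same $\beta$). It is followed by Lemma~\ref{lem:nonpositive_certificate} and Proposition~\ref{prop:tightness_on_system}, so the proposal is correct and matches the paper's proof. One small remark: membership in $\SPN^{2n}$ already follows from $S\succeq O$, since $\SymMat_+^{2n}\subseteq\SPN^{2n}$; the sign of the off-diagonal entries plays no role in that step.
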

\begin{proof}
    Define $\boldsymbol{\alpha}$ by  
    \begin{align*}
        \alpha_{1,1} &\coloneqq - \frac{z^*_{n+1}}{z^*_1}, \\
        \alpha_{i,i} &\coloneqq - \frac{z^*_i}{z^*_{n+i}}, \quad
        \alpha_{1,i} \coloneqq \frac{\left(z^*_i\right)^2 - \left(z^*_{n+i}\right)^2}{z^*_1 z^*_{n+i}}, \quad 
        \alpha_{i,i-1} \coloneqq 0, \quad i=2,\dots,n,
    \end{align*}
    and set
    \[
        \beta \coloneqq - \sum_{i = 1}^n \left(z^*_i\right)^2 + \sum_{j = 1}^n \left(z^*_{n+j}\right)^2.
    \]   
    A direct substitution shows that
     \eqref{eq:linear_system_for_feasibility_n_dense_i}--\eqref{eq:linear_system_for_feasibility_n_dense_2n}
    hold,
    which is analogous to the verification in the proof of Theorem~\ref{thm:upperbound_delta_cycle}.
    The equation~\eqref{eq:linear_system_for_feasibility_n_dense_1} also holds since we have
    \[
        \sum_{j=1}^n \alpha_{1,j} z^*_{n+j} + \beta
        = - \frac{\left(z^*_{n+1}\right)^2}{z^*_1} + \sum_{j=2}^n \frac{\left(z^*_j\right)^2 - \left(z^*_{n+j}\right)^2}{z^*_1} + \beta = -\left(z^*_1\right)^2 = -1.
    \]  
    Hence, $(\boldsymbol{\alpha},\beta)$ satisfies the system~\eqref{eq:linear_system_for_feasibility_n_dense}.
    Since all elements of $\z^*$ are positive, the denominators appearing in the definition of $\boldsymbol{\alpha}$ are positive.
    By the assumption $z_{n+j}^*/z_i^* \geq 1$ for all $(i,j) \in \Omega$, we can easily check that $\boldsymbol{\alpha} \leq \0$.
    Hence, by Lemma~\ref{lem:nonpositive_certificate}, it follows that $S(\boldsymbol{\alpha},\beta,0) \succeq O$.
    The desired result follows from Proposition~\ref{prop:tightness_on_system}.
\end{proof}

The main advantage of Theorem~\ref{thm:no_upperbound_delta_dense} over Theorem~\ref{thm:upperbound_delta_cycle} is 
    that it requires no upper bound on the local ratios. In Theorem~\ref{thm:upperbound_delta_cycle}, the certificate construction 
    relies on a uniform bound of the form $z^*_{n+j}/z^*_i \leq 1+\delta$, which becomes increasingly restrictive as $n$ grows. 
    By contrast, after adding the edges in $\Omega_{\mathrm{add}}$, Theorem~\ref{thm:no_upperbound_delta_dense} requires 
    only the one-sided condition $z^*_{n+j}/z^*_i \geq 1$ on the relevant edges.

\subsection{Cumulative-difference conditions for cycles}

We now develop an alternative tightness condition for cycles $G = (\{1,\ldots,n\},\{1,\ldots,n\},\Omega)$.   
The main idea is to parameterize the construction by cumulative differences between the two blocks of the vector $\z^*$.
As in the proof of Theorem~\ref{thm:upperbound_delta_cycle}, the cumulative differences are defined by
\[
    \Delta_1 \coloneqq 0, \qquad
    \Delta_k \coloneqq \sum_{t=2}^k \left[ \left(z^*_{n+t}\right)^2 - \left(z^*_t\right)^2\right] \; \text{for $k = 2,\ldots,n$}.
\]
For $k = 2,\ldots,n$, the quantity $\Delta_k$ measures how much the elements on the $\y$-side exceed those on the  $\x$-side up to index $k$.
The following theorem shows that, under a suitable bound on these cumulative differences relative to the next node on the cycle,
    a nonpositive certificate can be constructed directly on $\Omega_{\text{cycle}}$.  
\begin{theorem} \label{thm:cumulative_cycle_certificate}
    Let $\z^*$ be an optimal solution of \eqref{eq:precise_qcqpform_dnn_completion} for an $n \times n$ DNN completion problem, 
        $\KC$ be either $\SymMat_+^{2n}$ or $\DNN^{2n}$,
        and $\Omega$ be the index set of its specified elements.
    Suppose that $\Omega = \Omega_{\text{cycle}}$.
    If the following two inequalities    
    \begin{equation} \label{eq:cumulative_cycle_condition_upper}
        0 \leq \Delta_k \leq \left(z^*_{k+1}\right)^2 \qquad (k=2,\ldots,n-1)
    \end{equation}
    and
    \begin{equation} \label{eq:cumulative_cycle_condition_last}
        0 \leq \Delta_n 
    \end{equation}   
    hold, then the relaxation~\eqref{eq:dnn_relaxation} admits $\z^*\trans{\left(\z^*\right)}$ as a rank-one optimal solution for each $\theta \in \{0, 1\}$.
    Consequently, the relaxations of \eqref{eq:precise_qcqpform_dnn_completion} and \eqref{eq:qcqpform_dnn_completion} are both tight.
\end{theorem}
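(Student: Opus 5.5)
The plan is to reuse verbatim the certificate built in the proof of Theorem~\ref{thm:upperbound_delta_cycle}, and to observe that the local ratio bounds were used there only through two facts: $\Delta_k \ge 0$ and $\Delta_{i-1} \le (z^*_i)^2$. Both facts are now assumed directly by \eqref{eq:cumulative_cycle_condition_upper} and \eqref{eq:cumulative_cycle_condition_last}.

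Concretely, I would define
\begin{alignat*}{2}
    \alpha_{1,1} &\coloneqq -\frac{z^*_{n+1}}{z^*_1}, \qquad &
    \alpha_{1,n} &\coloneqq -\frac{\Delta_n}{z^*_1 z^*_{2n}}, \\
    \alpha_{i,i-1} &\coloneqq -\frac{\Delta_{i-1}}{z^*_i z^*_{n+i-1}}, \qquad &
    \alpha_{i,i} &\coloneqq -\frac{(z^*_i)^2 - \Delta_{i-1}}{z^*_i z^*_{n+i}} \quad (i=2,\ldots,n),
\end{alignat*}
and set $\beta \coloneqq -\sum_{i}(z^*_i)^2 + \sum_j (z^*_{n+j})^2$.

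The verification that $(\boldsymbol{\alpha},\beta)$ solves \eqref{eq:linear_system_for_feasibility_n} is purely algebraic and does not use any inequality. I would therefore copy it from the proof of Theorem~\ref{thm:upperbound_delta_cycle}:
\begin{itemize}
    \item equation \eqref{eq:linear_system_for_feasibility_n_1} follows from the telescoped definition of $\Delta_n$ together with $z^*_1 = 1$;
    \item equations \eqref{eq:linear_system_for_feasibility_n_i} and \eqref{eq:linear_system_for_feasibility_n_npi} follow from $\Delta_i - \Delta_{i-1} = (z^*_{n+i})^2 - (z^*_i)^2$;
    \item for $i = 1$, since $\Delta_1 = 0$, we get $\alpha_{2,1} = 0$, and the equation reduces to $\alpha_{1,1} z^*_1 = -z^*_{n+1}$;
    \item equation \eqref{eq:linear_system_for_feasibility_n_2n} reduces to $-\bigl((z^*_n)^2 - \Delta_{n-1}\bigr) - \Delta_n = -(z^*_{2n})^2$, which is again the telescoping identity.
\end{itemize}

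The sign checks are as follows, using positivity of $\z^*$ (Assumption~\ref{asm:connected}).
\begin{itemize}
    \item $\alpha_{1,1} < 0$ holds trivially.
    \item $\alpha_{1,n} \le 0$ follows from \eqref{eq:cumulative_cycle_condition_last}.
    \item $\alpha_{i,i-1} \le 0$ follows from $\Delta_{i-1} \ge 0$. This is given by \eqref{eq:cumulative_cycle_condition_upper} for $i-1 \ge 2$, and $\Delta_1 = 0$ covers $i = 2$.
    \item $\alpha_{i,i} \le 0$ requires $\Delta_{i-1} \le (z^*_i)^2$. This is exactly \eqref{eq:cumulative_cycle_condition_upper} with $k = i-1$ for $i = 3,\ldots,n$, and it is trivial for $i = 2$.
\end{itemize}
Hence $\boldsymbol{\alpha} \le \0$.

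Lemma~\ref{lem:nonpositive_certificate} then gives $S(\boldsymbol{\alpha},\beta,0) \succeq O$. Since $S \succeq O$ implies $S \in \SPN^{2n}$, the slack matrix lies in $\KC^*$ for both choices of $\KC$. Proposition~\ref{prop:tightness_on_system}(a) yields tightness for $\theta = 0$. Because $\boldsymbol{\alpha} \le \0 \le \rho$, part (b) yields tightness for $\theta = 1$.

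The only potential obstacle is the index bookkeeping at the boundary cases $i = 2$ and $i = n$, and at the wrap-around edge $(1,n)$. Each of these is covered by $\Delta_1 = 0$ and by the separate assumption \eqref{eq:cumulative_cycle_condition_last}, so I expect no genuine difficulty.
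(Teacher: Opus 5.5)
Your proposal is correct and matches the paper's proof essentially line for line. The paper reuses the same certificate $(\boldsymbol{\alpha},\beta)$ from Theorem~\ref{thm:upperbound_delta_cycle}, notes that the hypotheses give exactly $\Delta_n \geq 0$ and $0 \leq \Delta_{i-1} \leq (z^*_i)^2$ for $i=2,\ldots,n$ (so $\boldsymbol{\alpha} \leq \0$), checks \eqref{eq:linear_system_for_feasibility_n} by the same telescoping algebra, and concludes via Lemma~\ref{lem:nonpositive_certificate} and Proposition~\ref{prop:tightness_on_system}; your handling of $i=2$ via $\Delta_1=0$ and your remarks that $S(\boldsymbol{\alpha},\beta,0)\succeq O$ places the slack matrix in $\SPN^{2n}$ and that $\boldsymbol{\alpha}\leq\0\leq\rho$ activates part~(b) just spell out steps the paper leaves implicit.
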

\begin{proof}
    Define $\boldsymbol{\alpha}$ on $\Omega_\text{cycle}$ by  
    \begin{gather*}
        \alpha_{1,1} \coloneqq -\frac{z_{n+1}^*}{z_1^*} = -z_{n+1}^*, \quad       
            \alpha_{1,n} \coloneqq -\frac{\Delta_n}{z_1^* z_{2n}^*} = -\frac{\Delta_n}{z_{2n}^*}, \\
        \alpha_{i,i-1} \coloneqq -\frac{\Delta_{i-1}}{z_i^* z_{n+i-1}^*}, \quad
            \alpha_{i,i} \coloneqq -\frac{\left(z_i^*\right)^2 - \Delta_{i-1}}{z_i^* z_{n+i}^*}
            \quad (i=2,\ldots,n),
    \end{gather*}
    and set
    \[
        \beta \coloneqq -\sum_{i=1}^n \left(z^*_i\right)^2 + \sum_{i=1}^n \left(z^*_{n+i}\right)^2.
    \]
    Since all elements of $\z^*$ are positive in the present setting, all denominators above are positive.
    Moreover, the assumptions imply $\Delta_n \geq 0$ and $0 \leq \Delta_{i-1} \le (z_i^*)^2$ for $i=2,\ldots,n$,
        and therefore $\boldsymbol{\alpha} \leq \0$.
    By Lemma~\ref{lem:nonpositive_certificate}, we have $S(\boldsymbol{\alpha},\beta,0)\succeq O$.
    It remains to verify that $(\boldsymbol{\alpha},\beta)$ satisfies \eqref{eq:linear_system_for_feasibility_n}.
    For $i=2,\ldots,n$, we have
    \[
        \alpha_{i,i-1} z_{n+i-1}^* + \alpha_{i,i} z_{n+i}^*
        = -\frac{\Delta_{i-1}}{z_i^*} - \frac{\left(z_i^*\right)^2 - \Delta_{i-1}}{z_i^*}
        = -z_i^*.
    \]
    On the other hand, for $j=2,\ldots,n-1$, we have
    \[
        \alpha_{j,j} z_j^* + \alpha_{j+1,j} z_{j+1}^*
        = -\frac{\left(z_j^*\right)^2 - \Delta_{j-1}}{z_{n+j}^*} - \frac{\Delta_j}{z_{n+j}^*}
        = -\frac{\left(z_j^*\right)^2 + \Delta_j - \Delta_{j-1}}{z_{n+j}^*}
        = -z_{n+j}^*.
    \]
    Direct substitution leads to
    \begin{align*}
        \alpha_{1,1} z_1^* + \alpha_{2,1} z_2^* &= -z_{n+1}^*, \\
        \alpha_{n,n} z_n^* + \alpha_{1,n} z_1^*
            &= -\frac{\left(z_n^*\right)^2 - \Delta_{n-1}}{z_{2n}^*} - \frac{\Delta_n}{z_{2n}^*} = -z_{2n}^*, \\
        \alpha_{1,1} z_{n+1}^* + \alpha_{1,n} z_{2n}^* + \beta
            &= -\left(z_{n+1}^*\right)^2 - \Delta_n - \sum_{i=1}^n \left(z_i^*\right)^2 + \sum_{i=1}^n \left(z_{n+i}^*\right)^2
            = -\left(z_1^*\right)^2 = -1.
    \end{align*}
    Thus, $(\boldsymbol{\alpha},\beta)$ is a feasible solution of \eqref{eq:linear_system_for_feasibility_n}.
    Therefore, by Proposition~\ref{prop:tightness_on_system}, the desired result follows.
\end{proof}
\noindent
Theorem~\ref{thm:cumulative_cycle_certificate} is qualitatively different from Theorem~\ref{thm:upperbound_delta_cycle}.   
The earlier result is based on a uniform local ratio bound,
    whereas Theorem~\ref{thm:cumulative_cycle_certificate} allows local imbalances as long as their cumulative effect remains controlled.
In this sense, the cumulative condition is more structural: it reflects how the certificate propagates along the cycle.  

Theorem~\ref{thm:cumulative_cycle_certificate}  admits several useful specializations. The following corollary presents 
a simple monotonicity-type condition  expressed directly in terms of the components of $\z^*$.
 We subsequently derive a ratio-based consequence parameterized by a single scalar $\delta$, 
 bringing the condition closer in form to that of Theorem~\ref{thm:upperbound_delta_cycle}.
\begin{coro} \label{coro:local_cycle_certificate_v2}
    Let $\z^*$ be an optimal solution of \eqref{eq:precise_qcqpform_dnn_completion} for an $n \times n$ DNN completion problem,
        $\KC$ be either $\SymMat_+^{2n}$ or $\DNN^{2n}$,
        and $\Omega$ be the index set of its specified elements.
    Suppose that $\Omega = \Omega_{\text{cycle}}$.
    If
    \[
        z^*_t \leq z^*_{n+t} \leq z^*_{t+1} \quad (t=2,\ldots,n-1),
        \quad \text{and}\quad
        z^*_n \leq z^*_{2n},
    \]
    then the assumptions of Theorem~\ref{thm:cumulative_cycle_certificate} are satisfied.
    Consequently, the relaxations of \eqref{eq:precise_qcqpform_dnn_completion} and \eqref{eq:qcqpform_dnn_completion} are both tight.
\end{coro}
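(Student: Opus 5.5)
We only need to verify the hypotheses \eqref{eq:cumulative_cycle_condition_upper} and \eqref{eq:cumulative_cycle_condition_last} of Theorem~\ref{thm:cumulative_cycle_certificate}. Tightness for both $\theta\in\{0,1\}$ then follows directly from that theorem. Throughout, all components of $\z^*$ are positive, so the hypothesis of the corollary may be squared termwise:
\[
(z^*_t)^2 \le (z^*_{n+t})^2 \le (z^*_{t+1})^2 \quad (t=2,\ldots,n-1), \qquad (z^*_n)^2 \le (z^*_{2n})^2 .
\]

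\emph{Nonnegativity.} Every summand $(z^*_{n+t})^2-(z^*_t)^2$ in the definition of $\Delta_k$ is nonnegative. For $t\le n-1$ this comes from the left inequality above, and for $t=n$ from $z^*_n\le z^*_{2n}$. Hence $\Delta_k\ge 0$ for all $k=2,\ldots,n$. In particular \eqref{eq:cumulative_cycle_condition_last} holds, as does the lower half of \eqref{eq:cumulative_cycle_condition_upper}.

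\emph{Upper bound.} This is the only step needing an idea, and the idea is telescoping. For $k\in\{2,\ldots,n-1\}$, use $(z^*_{n+t})^2\le (z^*_{t+1})^2$ for each $t=2,\ldots,k$:
\[
\Delta_k=\sum_{t=2}^k\left[(z^*_{n+t})^2-(z^*_t)^2\right]\le \sum_{t=2}^k\left[(z^*_{t+1})^2-(z^*_t)^2\right]=(z^*_{k+1})^2-(z^*_2)^2\le (z^*_{k+1})^2 .
\]
The last step only drops $(z^*_2)^2\ge 0$. This is exactly the upper half of \eqref{eq:cumulative_cycle_condition_upper}.

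Both conditions of Theorem~\ref{thm:cumulative_cycle_certificate} therefore hold, and applying that theorem gives that $\z^*\trans{(\z^*)}$ is a rank-one optimal solution of \eqref{eq:dnn_relaxation} for $\theta=0$ and $\theta=1$. Hence both relaxations are tight. The main point to watch is the index range: the chain $z^*_{n+t}\le z^*_{t+1}$ is only available for $t\le n-1$. That is why the upper bound is needed, and can be proved, only for $k\le n-1$, while at $k=n$ only nonnegativity is required.
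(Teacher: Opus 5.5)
Your proof is correct and follows the paper's argument essentially verbatim. Nonnegativity of each summand $(z^*_{n+t})^2-(z^*_t)^2$, using $z^*_n\le z^*_{2n}$ for $t=n$, gives $\Delta_k\ge 0$ for all $k$ including $k=n$; and the bound $(z^*_{n+t})^2\le (z^*_{t+1})^2$ telescopes to $\Delta_k\le (z^*_{k+1})^2-(z^*_2)^2\le (z^*_{k+1})^2$ for $k\le n-1$, exactly as in the paper.
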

\begin{proof}
    For each $t=2,\ldots,n-1$, the assumption gives
    \[
        0 \leq \left(z^*_{n+t}\right)^2 - \left(z^*_t\right)^2
        \leq \left(z^*_{t+1}\right)^2 - \left(z^*_t\right)^2.
    \]
    Hence, for every $k=2,\ldots,n-1$,
    \[
        0 \leq \sum_{t=2}^k \left( \left(z^*_{n+t}\right)^2 - \left(z^*_t\right)^2 \right)
        \leq \sum_{t=2}^k \left( \left(z^*_{t+1}\right)^2 - \left(z^*_t\right)^2 \right)
        = \left(z^*_{k+1}\right)^2 - \left(z^*_2\right)^2
        \leq \left(z^*_{k+1}\right)^2.
    \]
    Moreover, $z^*_n \leq z^*_{2n}$ implies
    \[
        \sum_{t=2}^n \left( \left(z^*_{n+t}\right)^2 - \left(z^*_t\right)^2 \right)
        = \sum_{t=2}^{n-1} \left( \left(z^*_{n+t}\right)^2 - \left(z^*_t\right)^2 \right)
            + \left( \left(z^*_{2n}\right)^2 - \left(z^*_n\right)^2 \right)
        \geq 0.
    \]
    Therefore, all assumptions of Theorem~\ref{thm:cumulative_cycle_certificate} are satisfied.
\end{proof}

\begin{coro} \label{coro:delta_cycle_certificate_v2}
    Let $\z^*$ be an optimal solution of \eqref{eq:precise_qcqpform_dnn_completion} for an $n \times n$ DNN completion problem,
        $\KC$ be either $\SymMat_+^{2n}$ or $\DNN^{2n}$,
        and $\Omega$ be the index set of its specified elements.
    Suppose that $\Omega = \Omega_{\text{cycle}}$.
    If there exists $\delta \geq 0$ such that $(1+\delta)^{2(n-1)} - (1+\delta)^2 \leq 1$ and
    \[
        1 \leq \frac{z^*_{n+j}}{z^*_i} \leq 1 + \delta \qquad \text{for all $(i,j) \in \Omega_{\text{cycle}}$},
    \]
    then the assumptions of Theorem~\ref{thm:cumulative_cycle_certificate} are satisfied.
    Consequently, the relaxations of \eqref{eq:precise_qcqpform_dnn_completion} and \eqref{eq:qcqpform_dnn_completion} are both tight.
\end{coro}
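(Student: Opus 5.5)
We verify the two hypotheses of Theorem~\ref{thm:cumulative_cycle_certificate}, namely \eqref{eq:cumulative_cycle_condition_upper} and \eqref{eq:cumulative_cycle_condition_last}; tightness then follows from that theorem. Write $q \coloneqq (1+\delta)^2 \geq 1$. The ratio hypothesis applied to the edges $(t,t)$ and $(t+1,t)$ of $\Omega_{\text{cycle}}$ gives, for $t=2,\ldots,n-1$,
\[
    (z^*_t)^2 \leq (z^*_{n+t})^2 \leq q\,(z^*_t)^2, \qquad (z^*_{t+1})^2 \leq (z^*_{n+t})^2 \leq q\,(z^*_{t+1})^2 .
\]
For $t=n$ it gives $(z^*_n)^2 \leq (z^*_{2n})^2$. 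Every summand of $\Delta_k$ is therefore nonnegative, so $0 \leq \Delta_k$ for all $k$, including $k=n$. This settles \eqref{eq:cumulative_cycle_condition_last} and the lower half of \eqref{eq:cumulative_cycle_condition_upper}.

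For the upper bound $\Delta_k \leq (z^*_{k+1})^2$ with $2\leq k\leq n-1$, we chain the inequalities. From $z^*_{t+1}\leq z^*_{n+t}\leq (1+\delta) z^*_t$ we get $(z^*_t)^2 \geq q^{-1}(z^*_{t+1})^2$, and iterating gives
\[
    (z^*_t)^2 \leq (z^*_{n+t})^2 \leq q^{\,k+1-t}\,(z^*_{k+1})^2 \qquad (2\le t\le k).
\]
Each summand then satisfies
\[
    (z^*_{n+t})^2-(z^*_t)^2 \leq (q-1)(z^*_t)^2 \leq (q-1)q^{\,k-t}(z^*_{k+1})^2 .
\]
Summing the geometric series over $t=2,\ldots,k$ yields
\[
    \Delta_k \leq (q-1)\sum_{s=0}^{k-2} q^{s}\,(z^*_{k+1})^2 = \bigl(q^{k-1}-1\bigr)(z^*_{k+1})^2 .
\]
Since $q\geq 1$ and $k\leq n-1$, we have $q^{k-1}-1 \leq q^{n-2}-1$.

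It remains to show $q^{n-2}-1 \leq 1$ under the hypothesis $q^{n-1}-q \leq 1$. The hypothesis reads $q(q^{n-2}-1)\leq 1$, and $q \geq 1$ gives $q^{n-2}-1 \leq q(q^{n-2}-1) \leq 1$, which closes the argument. The stated condition is in fact weaker than what the proof needs; if the chaining were done more carelessly one would land on exactly the exponent $2(n-1)$ in the hypothesis. The main obstacle is getting the direction of the chaining right, that is, bounding $z^*_t$ by $z^*_{k+1}$ through the edges $(t+1,t)$. I expect this step to need care rather than new ideas, and the small case $n=2$, where \eqref{eq:cumulative_cycle_condition_upper} is vacuous, can be checked separately.
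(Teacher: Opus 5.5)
\textbf{There is a genuine gap in the chaining step.} Your nonnegativity argument is fine, but the upper bound on $\Delta_k$ is broken in two places.

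First, from $z^*_{t+1}\le z^*_{n+t}\le(1+\delta)z^*_t$ you obtain $z^*_{t+1}\le(1+\delta)z^*_t$. This is a \emph{lower} bound on $z^*_t$, and iterating it cannot give the upper bound $(z^*_t)^2\le q^{k+1-t}(z^*_{k+1})^2$ that you display. That display is true, but for a different reason. Pair $z^*_t\le z^*_{n+t}$ (edge $(t,t)$) with $z^*_{n+t}\le(1+\delta)z^*_{t+1}$ (edge $(t+1,t)$) to get $z^*_t\le(1+\delta)z^*_{t+1}$, and then iterate.

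Second, and more seriously, the next line uses $(z^*_t)^2\le q^{k-t}(z^*_{k+1})^2$. This is one power of $q$ less than what you displayed, and it is false: at $t=k$ it asserts $z^*_k\le z^*_{k+1}$, which the hypotheses do not give. For instance, take $n=3$, $q=3/2$ (so $q^2-q\le 1$), and $(z^*_1,\ldots,z^*_6)=(1,1.1,1,1.1,1.2,1)$. All six ratios $z^*_{n+j}/z^*_i$ on $\Omega_{\text{cycle}}$ lie in $[1,\sqrt{3/2}]$, and this vector is the unique feasible (hence optimal) point for $A_{ij}=z^*_iz^*_{n+j}$. Yet $z^*_2>z^*_3$. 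So your argument does not establish $\Delta_k\le(q^{k-1}-1)(z^*_{k+1})^2$, nor your claim that the hypothesis is stronger than needed.

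With the correct exponent, the geometric sum is $(q-1)\sum_{t=2}^k q^{k+1-t}=q^k-q\le q^{n-1}-q\le 1$. This uses exactly the stated hypothesis, and it is the paper's proof. So the exponent $2(n-1)$ comes from correct chaining, not careless chaining.

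Your sharper bound can still be rescued, but it needs an idea absent from the proposal: bound each summand by $(q-1)(z^*_{t+1})^2$ rather than by $(q-1)(z^*_t)^2$.
\begin{itemize}
\item If $z^*_t\le z^*_{t+1}$, this follows from $(z^*_{n+t})^2\le q(z^*_t)^2$.
\item If $z^*_t>z^*_{t+1}$, use $(z^*_{n+t})^2\le q(z^*_{t+1})^2$ together with $(z^*_t)^2>(z^*_{t+1})^2$.
\end{itemize}
Combined with $(z^*_{t+1})^2\le q^{k-t}(z^*_{k+1})^2$ from the correct chaining, this gives $\Delta_k\le(q^{k-1}-1)(z^*_{k+1})^2$. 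Then $q^{n-2}\le 2$ would indeed suffice, and it is implied by $q^{n-1}-q\le 1$.
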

\begin{proof}
    Let $q \coloneqq (1+\delta)^2$.
    By assumption, $q$ satisfies the inequality $q^{n-1} - q \leq 1$.
    For each $t = 2, \ldots, n - 1$, the assumption gives $z^*_t \leq z^*_{n+t} \leq (1 + \delta)z^*_t$,
        which implies 
    \[
        0 \leq \left(z^*_{n+t}\right)^2 - \left(z^*_t\right)^2 \leq \left(q - 1\right)\left(z^*_t\right)^2.
    \]
    Similarly, for each $t = 2, \ldots, n - 1$, the assumption gives $z^*_{n+t} \leq (1 + \delta)z^*_{t+1}$.
    Combining this with $z^*_t \leq z^*_{n+t}$ yields $z^*_t \leq (1 + \delta)z^*_{t+1}$.
    Hence, for every $k \in \{2,\ldots,n-1\}$ and every $t \in \{2,\ldots,k\}$, we have
    \[
        \left(z^*_t\right)^2 \leq q^{k+1-t}\left(z^*_{k+1}\right)^2.
    \]
    Therefore,
    \begin{align*}
        0 \leq \Delta_k \leq (q - 1) \sum_{t=2}^k \left(z^*_t\right)^2
        &\leq (q-1)\sum_{t=2}^k q^{k+1-t} \left(z^*_{k+1}\right)^2 \\
        &= \left(q^k - q\right) \left(z^*_{k+1}\right)^2 & &\leq \left(z^*_{k+1}\right)^2,
    \end{align*}
    where the last inequality follows from $q^{k} - q \leq q^{n-1} - q \leq 1$ for any $k \leq n-1$.
    Finally, since each summand in $\Delta_n$ is nonnegative, we have $\Delta_n \geq 0$.
    Thus, all assumptions of Theorem~\ref{thm:cumulative_cycle_certificate} are satisfied.
\end{proof}

\section{Concluding remarks} \label{sec:conclusion}

We have studied tight conic relaxations for a QCQP formulation of rank-one doubly nonnegative matrix completion.
First, for sparsity patterns whose blocks consist of edges and cycles, we have shown 
that the relaxations associated with the SPN and COP cones are equivalent. 
This result clarifies when these two dual conic formulations yield the same relaxation.
Second, for cycle-type sparsity patterns, we have derived explicit sufficient conditions under which the SDP and DNN relaxations are tight. 
These conditions were obtained by constructing concrete certificates for the complementary-slackness system and 
verifying positive semidefiniteness of the resulting slack matrices.
 In particular, we have presented three types of sufficient conditions: a local ratio condition on the cycle, a condition obtained by adding auxiliary edges, and a cumulative-difference condition that captures the propagation of imbalances along the cycle.
 
Several directions remain for future work. A natural next step is to extend the present analysis beyond 
cycle-type sparsity patterns and identify broader graph classes for which similar certificate constructions 
are possible. It would also be valuable to sharpen the current sufficient conditions and determine whether 
some of them can be strengthened to necessary and sufficient conditions for tightness. Another important question
 is how far the auxiliary-edge construction can be generalized. The results of Section~\ref{sec:tightness_for_conic} 
 suggest that additional connectivity may simplify the structure of feasible certificates, and a systematic understanding 
 of this phenomenon would be of independent interest. Finally, while this paper focuses on the rank-one DNN setting, 
 it would be meaningful to study whether the same conic viewpoint extends to more general sparse completion problems, 
 as well as to approximate recovery under perturbations or noisy observations.

\vspace{0.5cm}

\subsection*{Data availability statement} We did not use any externally collected data in our work. 

\subsection*{Declarations} 

{\bf Conflict of interest} The authors have no conflict of interest to disclose.

\bibliographystyle{abbrvnat} 


\begin{thebibliography}{31}
\providecommand{\natexlab}[1]{#1}
\providecommand{\url}[1]{\texttt{#1}}
\expandafter\ifx\csname urlstyle\endcsname\relax
  \providecommand{\doi}[1]{doi: #1}\else
  \providecommand{\doi}{doi: \begingroup \urlstyle{rm}\Url}\fi

\bibitem[Ai et~al.(2025)Ai, Liang, and Yuan]{ai2025tightness}
W.~Ai, W.~Liang, and J.~Yuan.
\newblock On the tightness of an {SDP} relaxation for homogeneous {QCQP} with
  three real or four complex homogeneous constraints.
\newblock \emph{Mathematical Programming}, 211\penalty0 (1):\penalty0 5--48,
  2025.

\bibitem[Argue et~al.(2023)Argue, K{\dotlessi}l{\dotlessi}n\chige{c}-Karzan,
  and Wang]{Argue2020necessary}
C.~J. Argue, F.~K{\dotlessi}l{\dotlessi}n\chige{c}-Karzan, and A.~L. Wang.
\newblock Necessary and sufficient conditions for rank-one-generated cones.
\newblock \emph{Mathematics of Operations Research}, 48\penalty0 (1):\penalty0
  100--126, 2023.
\newblock \doi{10.1287/moor.2022.1254}.

\bibitem[Azuma et~al.(2022)Azuma, Fukuda, Kim, and Yamashita]{Azuma2021}
G.~Azuma, M.~Fukuda, S.~Kim, and M.~Yamashita.
\newblock Exact {{SDP}} relaxations of quadratically constrained quadratic
  programs with forest structures.
\newblock \emph{Journal of Global Optimization}, 82\penalty0 (2):\penalty0
  243--262, 2022.

\bibitem[Azuma et~al.(2023)Azuma, Fukuda, Kim, and Yamashita]{Azuma2022}
G.~Azuma, M.~Fukuda, S.~Kim, and M.~Yamashita.
\newblock Exact {SDP} relaxations for quadratic programs with bipartite graph
  structures.
\newblock \emph{Journal of Global Optimization}, 86:\penalty0 671 -- 691, 2023.

\bibitem[Azuma et~al.(2025)Azuma, Kim, and Yamashita]{Azuma2025}
G.~Azuma, S.~Kim, and M.~Yamashita.
\newblock Rank-one matrix completion via high-rank matrices in sum-of-squares
  relaxations.
\newblock \emph{Journal of Global Optimization}, 92\penalty0 (2):\penalty0
  321--343, 2025.

\bibitem[Boyd and Vandenberghe(2004)]{Boyd2004}
S.~Boyd and L.~Vandenberghe.
\newblock \emph{Convex Optimization}.
\newblock Cambridge University Press, NY, 2004.
\newblock ISBN 978-0521833783.

\bibitem[Burer and Ye(2020)]{Burer2019}
S.~Burer and Y.~Ye.
\newblock Exact semidefinite formulations for a class of (random and
  non-random) nonconvex quadratic programs.
\newblock \emph{Mathematical Programming}, 181\penalty0 (1):\penalty0 1--17,
  2020.
\newblock \doi{10.1007/s10107-019-01367-2}.

\bibitem[Cand\`{e}s and Plan(2010)]{Candes2010survey}
E.~J. Cand\`{e}s and Y.~Plan.
\newblock Matrix completion with noise.
\newblock \emph{Proceedings of the IEEE}, 98\penalty0 (6):\penalty0 925--936,
  2010.
\newblock \doi{10.1109/JPROC.2009.2035722}.

\bibitem[Cand\`{e}s and Recht(2009)]{Candes2009}
E.~J. Cand\`{e}s and B.~Recht.
\newblock Exact matrix completion via convex optimization.
\newblock \emph{Foundations of {C}omputational {M}athematics}, 9\penalty0
  (6):\penalty0 717--772, 2009.

\bibitem[Cand\`{e}s and Tao(2010)]{Candes2010}
E.~J. Cand\`{e}s and T.~Tao.
\newblock The power of convex relaxation: Near-optimal matrix completion.
\newblock \emph{IEEE Transactions on Information Theory}, 56\penalty0
  (5):\penalty0 2053--2080, 2010.

\bibitem[Cosse and Demanet(2021)]{Cosse2021}
A.~Cosse and L.~Demanet.
\newblock Stable rank-one matrix completion is solved by the level 2 {Lasserre}
  relaxation.
\newblock \emph{Foundations of Computational Mathematics}, 21:\penalty0
  891--940, 2021.

\bibitem[Dickinson and D\"{u}r(2012)]{dickinson2012linear}
P.~J.~C. Dickinson and M.~D\"{u}r.
\newblock Linear-time complete positivity detection and decomposition of sparse
  matrices.
\newblock \emph{SIAM Journal on Matrix Analysis and Applications}, 33\penalty0
  (3):\penalty0 701--720, 2012.
\newblock \doi{10.1137/110848177}.

\bibitem[Dickinson and Gijben(2014)]{dickinson2014computational}
P.~J.~C. Dickinson and L.~Gijben.
\newblock On the computational complexity of membership problems for the
  completely positive cone and its dual.
\newblock \emph{Computational Optimization and Applications}, 57:\penalty0
  403--415, 2014.

\bibitem[Fukuda et~al.(2001)Fukuda, Kojima, Murota, and
  Nakata]{fukuda2001exploiting}
M.~Fukuda, M.~Kojima, K.~Murota, and K.~Nakata.
\newblock Exploiting sparsity in semidefinite programming via matrix completion
  {I}: General framework.
\newblock \emph{SIAM {J}ournal on {O}ptimization}, 11\penalty0 (3):\penalty0
  647--674, 2001.

\bibitem[Grone et~al.(1984)Grone, Johnson, S{\'a}, and
  Wolkowicz]{grone1984positive}
R.~Grone, C.~R. Johnson, E.~M. S{\'a}, and H.~Wolkowicz.
\newblock Positive definite completions of partial hermitian matrices.
\newblock \emph{Linear algebra and its applications}, 58:\penalty0 109--124,
  1984.

\bibitem[Keshavan et~al.(2009)Keshavan, Montanari, and Oh]{Keshavan2009matrix}
R.~H. Keshavan, A.~Montanari, and S.~Oh.
\newblock Matrix completion from noisy entries.
\newblock In \emph{Proceedings of the 23rd International Conference on Neural
  Information Processing Systems}, NIPS'09, pages 952--960, Red Hook, NY, USA,
  2009. Curran Associates Inc.

\bibitem[Kim and Kojima(2003)]{kim2003exact}
S.~Kim and M.~Kojima.
\newblock Exact solutions of some nonconvex quadratic optimization problems via
  {SDP} and {SOCP} relaxations.
\newblock \emph{Computational Optimization and Applications}, 26\penalty0
  (2):\penalty0 143--154, 2003.

\bibitem[K{\dotlessi}l{\dotlessi}n\chige{c}-Karzan and
  Wang(2021)]{Kilinckarzan2021exactness}
F.~K{\dotlessi}l{\dotlessi}n\chige{c}-Karzan and A.~L. Wang.
\newblock Exactness in {SDP} relaxations of {QCQPs}: Theory and applications.
\newblock arXiv:2107.06885, 2021.

\bibitem[Kojima et~al.(2025)Kojima, Kim, and Arima]{Kojima2025extending}
M.~Kojima, S.~Kim, and N.~Arima.
\newblock Extending exact convex relaxations of quadratically constrained
  quadratic programs.
\newblock arXiv:2504.03204v2, 2025.

\bibitem[Lasserre(2001)]{Lasserre2001}
J.~B. Lasserre.
\newblock Global optimization with polynomials and the problem of moments.
\newblock \emph{SIAM {J}ournal on {O}ptimization}, 11\penalty0 (3):\penalty0
  796--817, 2001.

\bibitem[Mazumder et~al.(2010)Mazumder, Hastie, and
  Tibshirani]{Mazumder2010spectral}
R.~Mazumder, T.~Hastie, and R.~Tibshirani.
\newblock Spectral regularization algorithms for learning large incomplete
  matrices.
\newblock \emph{Journal of Machine Learning Research}, 11\penalty0
  (80):\penalty0 2287--2322, 2010.

\bibitem[Merris(1994)]{Merris1994laplacian}
R.~Merris.
\newblock Laplacian matrices of graphs: a survey.
\newblock \emph{Linear Algebra and its Applications}, 197--198:\penalty0
  143--176, 1994.
\newblock \doi{10.1016/0024-3795(94)90486-3}.

\bibitem[Murty and Kabadi(1987)]{Murty1987some}
K.~G. Murty and S.~N. Kabadi.
\newblock Some {NP}-complete problems in quadratic and nonlinear programming.
\newblock \emph{Mathematical Programming}, 39\penalty0 (2):\penalty0 117--129,
  1987.

\bibitem[Parrilo(2003)]{Parrilo2003}
P.~A. Parrilo.
\newblock Semidefinite programming relaxations for semialgebraic problems.
\newblock \emph{Mathematical Programming}, 96\penalty0 (2):\penalty0 293--320,
  2003.

\bibitem[Recht(2011)]{Recht2011simpler}
B.~Recht.
\newblock A simpler approach to matrix completion.
\newblock \emph{Journal of Machine Learning Research}, 12\penalty0
  (104):\penalty0 3413--3430, 2011.

\bibitem[Shaked-Monderer(2016)]{shakedmonderer2016spn}
N.~Shaked-Monderer.
\newblock {SPN} graphs: {W}hen copositive = {SPN}.
\newblock \emph{Linear Algebra and its Applications}, 509:\penalty0 82--113,
  2016.

\bibitem[Shaked-Monderer and Berman(2021)]{ShakedMonderer2021copositive}
N.~Shaked-Monderer and A.~Berman.
\newblock \emph{Copositive and Completely Positive Matrices}.
\newblock World Scientific, 2021.
\newblock ISBN 978-981-12-0436-4.
\newblock \doi{10.1142/11386}.

\bibitem[Sojoudi and Lavaei(2014)]{Sojoudi2014exactness}
S.~Sojoudi and J.~Lavaei.
\newblock Exactness of semidefinite relaxations for nonlinear optimization
  problems with underlying graph structure.
\newblock \emph{SIAM Journal on Optimization}, 24\penalty0 (4):\penalty0
  1746--1778, 2014.
\newblock \doi{10.1137/130915261}.

\bibitem[Srebro et~al.(2004)Srebro, Rennie, and Jaakkola]{Srebro2004maximum}
N.~Srebro, J.~D.~M. Rennie, and T.~S. Jaakkola.
\newblock Maximum-margin matrix factorization.
\newblock In \emph{Proceedings of the 18th International Conference on Neural
  Information Processing Systems}, NIPS'04, pages 1329--1336. MIT Press, 2004.

\bibitem[Wang and
  K{\dotlessi}l{\dotlessi}n\chige{c}-Karzan(2021)]{Wang2021tightness}
A.~L. Wang and F.~K{\dotlessi}l{\dotlessi}n\chige{c}-Karzan.
\newblock On the tightness of {SDP} relaxations of {QCQPs}.
\newblock \emph{Mathematical Programming}, 2021.
\newblock \doi{10.1007/s10107-020-01589-9}.

\bibitem[Wright and Ma(2022)]{WrightMa2022}
J.~Wright and Y.~Ma.
\newblock \emph{High-Dimensional Data Analysis with Low-Dimensional Models:
  Principles, Computation, and Applications}.
\newblock Cambridge University Press, 2022.
\newblock ISBN 978-1-108-48973-7.

\end{thebibliography}

\end{document}